\long\def\@makefntext#1{
\protect\noindent \hbox to 3.2pt {\hskip-.9pt  
$^{{\eightrm\@thefnmark}}$\hfil}#1\hfill}               
\def\@makefnmark{\hbox to 0pt{$^{\@thefnmark}$\hss}}    
\def\ps@myheadings{\let\@mkboth\@gobbletwo
  \def\@oddhead{{\slshape\rightmark}\hfil{\footnotesize\thepage}}
  \def\@oddfoot{}
  \def\@evenhead{{\footnotesize\thepage}\hfil\slshape\leftmark}
  \def\@evenfoot{}
  \def\sectionmark##1{}\def\subsectionmark##1{}
}
\renewcommand\section{\@startsection {section}{1}{\z@}%
                                   {-3.5ex \@plus -1ex \@minus -.2ex}%
                                   {2.3ex \@plus.2ex}%
                                   {\tenbf\large\bfseries}}
\renewcommand\subsection{\@startsection{subsection}{2}{\z@}%
                                     {-3.25ex\@plus -1ex \@minus -.2ex}%
                                     {1.5ex \@plus .2ex}%
                                     {\normalfont\bfseries}}
\newcommand{\textlineskip}{\baselineskip=13pt}
\newcommand{\timenow}{%
  \@tempcnta=\time \divide\@tempcnta by 60 \number\@tempcnta:\multiply
  \@tempcnta by 60 \@tempcntb=\time \advance\@tempcntb by -\@tempcnta
  \ifnum\@tempcntb <10 0\number\@tempcntb\else\number\@tempcntb\fi}
\def\abstracts#1#2#3{{
        \centering{\begin{minipage}{4.5in}\footnotesize\baselineskip=10pt
        \parindent=0pt #1\par 
        \parindent=15pt \footnotesize\baselineskip=10pt
        {\footnotesize\it Keywords}\/: #2\par
        \parindent=15pt \footnotesize\baselineskip=10pt
        {\footnotesize\it 1991 MSC}\/: #3
        \end{minipage}}\par}} 
\renewenvironment{thebibliography}[1]
        {\frenchspacing
         \ninerm\baselineskip=11pt
         \begin{list}{\arabic{enumi}.}
        {\usecounter{enumi}\setlength{\parsep}{0pt}     
         \setlength{\leftmargin 17pt}{\rightmargin 0pt}   
         \setlength{\itemsep}{0pt} \settowidth
        {\labelwidth}{#1.}\sloppy}}{\end{list}}
\def\pmb#1{\setbox0=\hbox{#1}
        \kern-.025em\copy0\kern-\wd0
        \kern.05em\copy0\kern-\wd0
        \kern-.025em\raise.0433em\box0}
\def\fnt#1#2{\footnotetext{\kern-.3em
        {$^{\mbox{\scriptsize #1}}$}{#2}}}
\def\runninghead#1#2{\pagestyle{myheadings}
\markboth{{\protect\footnotesize\it{\quad #1}}}
{{\protect\footnotesize\it{#2\quad}}}}
\font\tenbf=cmbx10
\font\ninerm=cmr9
\font\eightrm=cmr8
\newtheorem{theorem}{Theorem}
\newtheorem{assumption}[theorem]{Assumption}
\newtheorem{proposition}[theorem]{Proposition}
\newtheorem{definition}[theorem]{Definition}
\newtheorem{corollary}[theorem]{Corollary}
\newtheorem{lemma}[theorem]{Lemma}
\newtheorem{remark}{Remark}
\def\bA{\mathbb A}
\def\bF{\mathbb F}
\def\bG{\mathbb G}
\def\bW{\mathbb W}
\def\calf{{\mathcal F}}
\def\caln{{\mathcal N}}
\def\F{\mathcal F}
\def\cX{\mathcal X}
\def\cZ{\mathcal Z}
\DeclareMathOperator{\Det}{Det}
\def\ee{\mathrm{e}}
\def\cC{\mathcal C}
\def\u{\mathbf u}
\def\v{\mathbf v}
\def\f{\mathbf f}
\def\g{\mathbf g}
\def\h{\mathbf h}
\def\w{\mathbf w}
\def\E{\mathbb E}
\def\Chi{\mathcal X}
\def\Pr{\mathbb P}
\def\R{\mathbb R}
\def\linea{\vskip 1\baselineskip}
\begin{document}
\setlength{\textheight}{7.7truein}

\runninghead{S. Bonaccorsi, F. Confortola, E. Mastrogiacomo}{Control of stochastic differential equations with dynamical boundary conditions}

\normalsize\textlineskip
\thispagestyle{empty}
\setcounter{page}{1}


\vspace*{1in}

\centerline{\bf Optimal control of stochastic differential equations}
\baselineskip=13pt
\centerline{\bf with dynamical boundary conditions}
\baselineskip=13pt
\vspace*{0.37truein}

\centerline{\footnotesize Stefano BONACCORSI\footnote{stefano.bonaccorsi@unitn.it}, \qquad Fulvia CONFORTOLA\footnotemark, 
  \qquad Elisa MASTROGIACOMO}
\baselineskip=12pt
\centerline{\footnotesize\it Dipartimento di Matematica, Universit\`a di Trento,}
\baselineskip=10pt
\centerline{\footnotesize\it via Sommarive 14, 38050 Povo (Trento), Italia}
\baselineskip=10pt
\footnotetext{Current address: {fulvia.confortola@unimib.it}}
\vspace*{0.21truein} \abstracts {In this paper we investigate the
  optimal control problem for a class of stochastic Cauchy evolution
  problem with non standard boundary dynamic and control. The model is
  composed by an infinite dimensional dynamical system coupled with a
  finite dimensional dynamics, which describes the boundary conditions
  of the internal system. In other terms, we are concerned with non
  standard boundary conditions, as the value at the boundary is
  governed by a different stochastic differential equation.  }
{Stochastic differential equations in infinite dimensions, 
dynamical boundary conditions, optimal control}
{}

\vspace*{4pt}
\baselineskip=13pt
\normalsize     


\section{Setting of the problem}

Our model is a one dimensional semilinear diffusion equation in a
confined system, where interactions with extremal points cannot be
disregarded. The extremal points have a mass and the boundary
potential evolves with a specific dynamic. Stochasticity enters
through fluctuations and random perturbations both in the inside as on
the boundaries; in particular, in our model we assume that the control
process is perturbed by a noisy term.

There is a growing literature concerning such
problems; we shall mention the paper \cite{chueshov/schmalfuss} where
a problem in a domain ${\mathcal O}\subset \R^n$ is concerned; the authors
cite as an example an SPDE with stochastic perturbations which appears
in connection with random fluctuations of the atmospheric pressure
field. As opposite to ours, however, that paper is not concerned with
control problems. Quite recently, the authors became aware of the
paper \cite{bertini/noja/posilicano} where a different application to
some generalized Lamb model is proposed.

The internal dynamic is described by a stochastic evolution problem in
the unit interval $D = [0,1]$
\begin{equation}\label{dyn-int-intro}
  \partial_t u(t,x) = \partial_x^2 u(t,x)
  + f(t,x,u(t,x)) + g(t,x,u(t,x)) \dot W(t,x)
\end{equation}
which we write as an abstract evolution problem on the space $L^2(0,1)$
\begin{equation}
  {\rm d} u(t) = A_m u(t) + F(t,u(t)) \, {\rm d}t + G(t,u(t)) \, {\rm d}W(t),
\end{equation}
where the leading operator is $A_m = \partial_x^2$ with domain $D(A_m)
= H^2(0,1)$. We assume that $f$ and $g$ are real valued mappings,
defined on $[0,T] \times [0,1] \times \R$, which verify some
boundedness and Lipschitz continuity assumptions.

The boundary dynamic is governed by a finite dimensional system which
follows a (ordinary, two dimensional) stochastic differential equation
\begin{equation*}
  \partial_t v_i(t) = -b_i v_i(t) + \partial_\nu u(t,i) + h_i(t) \dot V_i(t), \qquad i=0,1
\end{equation*}
where $b_i$ are positive numbers and $h_i(t)$ are bounded, measurable
functions; $\partial_\nu$ is the normal derivative on the boundary,
and coincides with $(-1)^i \partial_x$ for $i=0,1$. For notational
semplicity, we introduce the $2\times 2$ diagonal matrices $B = {\rm
  diag}(-b-_0,b_1)$ and $h(t) = {\rm diag}(h_0(t),h_1(t))$. There is a
constraint
\begin{equation*}
  L u = v
\end{equation*}
which we interpret as the operator evaluating boundary conditions; the
system is coupled by the presence, in the second equation, of a
feedback term $C$ that is an unbounded operator
\begin{equation*}
  C u = \begin{pmatrix}\partial_x u(0) \\ -\partial_x u(1)\end{pmatrix}.
\end{equation*}

The idea is to write the problem in abstract form for the vector $\u =
\begin{pmatrix}u(\cdot) \\ v\end{pmatrix}$ on the space
$\Chi = L^2(0,1)\times \R^2$, that is
\begin{equation}\label{eq:intro-1}
  \begin{cases}
    {\rm d}\u = \bA\u(t) + \bF(t,\u(t)) \, {\rm d}t + \bG(t,\u(t)) \, {\rm d}\bW(t)
    \\
    \u(0) = \begin{pmatrix}u_0 \\ v_0 \end{pmatrix}
  \end{cases}
\end{equation}
Our main concern is to study spectral properties of the matrix
operator 
\begin{equation*}
  \bA = 
  \begin{pmatrix}
    A_m & 0 \\ C & B
  \end{pmatrix}
\end{equation*}
on the domain
\begin{equation*}
  D(\bA) = \{\u\in D(A_m) \times \R^{2} \,:\, L u = v\}.
\end{equation*}

\begin{theorem}\label{th:1}
  $\bA$ is the infinitesimal generator of a strongly continuous,
  analytic semigroup of contractions $\ee^{t\bA}$, self-adjoint and
  compact.
\end{theorem}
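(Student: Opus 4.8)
The plan is to obtain all the asserted properties simultaneously by realising $-\bA$ as the self-adjoint operator associated with a symmetric, closed, bounded-below sesquilinear form on $\Chi = L^2(0,1)\times\R^2$. Since in one dimension the trace $u\mapsto Lu=(u(0),u(1))$ is bounded from $H^1(0,1)$ into $\R^2$, the space
\[
  V = \bigl\{\u=(u,v)\in H^1(0,1)\times\R^2 \,:\, Lu=v\bigr\}
\]
is topologically isomorphic to $H^1(0,1)$ via $u\mapsto(u,Lu)$, is dense in $\Chi$ (approximate a given $(z,\zeta)$ by adjusting a smooth function near the two endpoints), and carries the form
\[
  a(\u,\mathbf z) = \int_0^1 u'(x)\,\overline{z'(x)}\,{\rm d}x \;-\; \langle Bv,\zeta\rangle_{\R^2},
  \qquad \u=(u,v),\ \mathbf z=(z,\zeta)\in V .
\]
As $B=-\,{\rm diag}(b_0,b_1)$ is symmetric and, since $b_0,b_1>0$, negative definite, $a$ is symmetric and $a(\u,\u)=\|u'\|_{L^2}^2+b_0|v_0|^2+b_1|v_1|^2\ge 0$; with the trace bound this shows $a(\u,\u)+\|\u\|_\Chi^2$ is equivalent to $\|u\|_{H^1(0,1)}^2$ on $V$, so $V$ is complete for the form norm, i.e. $a$ is closed, and boundedness of $a$ on $V$ is clear.

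By the first representation theorem, $a$ is the form of a unique self-adjoint operator $\mathcal A\ge 0$, with $D(\mathcal A)\subset V$ dense in $\Chi$ and $a(\u,\mathbf z)=\langle\mathcal A\u,\mathbf z\rangle_\Chi$ for $\u\in D(\mathcal A)$, $\mathbf z\in V$. It then suffices to show $\mathcal A=-\bA$. For $\u\in D(\bA)$ and $\mathbf z\in V$, integrating $\int_0^1 u''\overline z\,{\rm d}x$ by parts and using $u(i)=v_i$, $z(i)=\zeta_i$, the endpoint terms exactly cancel $\langle Cu,\zeta\rangle_{\R^2}$, so $\langle -\bA\u,\mathbf z\rangle_\Chi=a(\u,\mathbf z)$; hence $-\bA\subset\mathcal A$. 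Conversely, if $\u=(u,v)\in D(\mathcal A)$ with $\mathcal A\u=\f=(f,\phi)$, testing $a(\u,\mathbf z)=\langle\f,\mathbf z\rangle_\Chi$ against $\mathbf z=(z,0)$ with $z\in H^1_0(0,1)$ yields $-u''=f$ distributionally, whence $u\in H^2(0,1)=D(A_m)$; testing then against arbitrary $\mathbf z\in V$ and integrating by parts back identifies the remaining part of $\f$ with $Cu+Bv$. Thus $D(\mathcal A)=D(\bA)$ and $\mathcal A=-\bA$, so $\bA$ is self-adjoint with $\bA\le 0$ (and $D(\bA)$ is dense).

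Since $-\bA\ge0$ is self-adjoint, the spectral theorem gives that $\bA$ generates a bounded analytic semigroup of contractions $\ee^{t\bA}$: it is strongly continuous, $\|\ee^{t\bA}\|\le1$ because $\sigma(\bA)\subset(-\infty,0]$, it is holomorphic on the right half-plane, and each $\ee^{t\bA}$ is self-adjoint because $\bA=\bA^{*}$. Finally, the embedding $V\hookrightarrow\Chi$ is compact: a bounded sequence in $V$ gives a bounded sequence in $H^1(0,1)$, which has an $L^2$-convergent subsequence by Rellich--Kondrachov, while the associated traces lie in the finite-dimensional $\R^2$ and converge along a further subsequence. Hence $\bA$ has compact resolvent, and an analytic semigroup with compact resolvent consists of compact operators for every $t>0$.

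The only genuinely delicate step is the identification $D(\mathcal A)=D(\bA)$ --- specifically, recovering the boundary relation $\phi=Cu+Bv$ from the weak formulation, where one must track carefully how the co-normal traces $u'(0),u'(1)$ pair with the $\R^2$-component; everything else (density of $V$, closedness, the cancellation in the integration by parts, Rellich--Kondrachov, and the spectral-theoretic consequences) is routine. If $B$ were not negative definite one would lose only the contraction property, since $B$ is a bounded perturbation and the generation of an analytic semigroup with compact resolvent is stable under it.
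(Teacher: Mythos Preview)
Your proof is correct and follows essentially the same route as the paper: both realise $-\bA$ as the self-adjoint operator associated with the symmetric, nonnegative, closed form $a(\u,\v)=\int_0^1 u'v'\,{\rm d}x + b_0\,u(0)v(0)+b_1\,u(1)v(1)$ on the domain $V=\{(u,Lu):u\in H^1(0,1)\}\subset\Chi$, identify this operator with $-\bA$ via the two inclusions (testing first against $H^1_0$ to recover $u\in H^2$, then against all of $V$ to recover the boundary part), and deduce compactness of the resolvent---you via the compact embedding $V\hookrightarrow\Chi$, the paper via $D(\bA)\subset H^2(0,1)\times\R^2$. The differences are cosmetic.
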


We shall prove the above theorem in Section \ref{sez.det.op}. Further,
we shall prove that $\bA$ is a self-adjoint operator with compact
resolvent, which implies that the generated semigroup is
Hilbert-Schmidt. Moreover, we can characterize the complete,
orthonormal system of eigenfunctions associated to $\bA$.

Let us fix a complete probability space $(\Omega,\F,\{\F_t\},\Pr)$; on
this space we define $W(t)$, that is a space-time Wiener process
taking values in $X$ and $V(t) = (V_1(t),V_2(t))$, that is a
$\R^2$-valued Wiener process, such that $W(t,x)$ and $V(t)$ are
independent.

As a corollary to Theorem \ref{th:1}, using standard results for
infinite dimensional stochastic differential equations, compare
\cite[Theorem 7.4]{dpz:stochastic}, we obtain the following existence
result

\begin{theorem}\label{th:2}
  For any initial condition $\begin{pmatrix}u_0 \\ v_0 \end{pmatrix}
  \in X \times \R^2$ there exists a unique process $\u \in
  L^2_F(0,T;X \times \R^2)$ such that
  \begin{equation*}
    \u(t) = \ee^{t\bA} \begin{pmatrix}u_0 \\ v_0 \end{pmatrix} 
    + \int_0^t \ee^{(t-s) \bA}\bF(\u(s)) \, {\rm d}s + \int_0^t  \ee^{(t-s) \bA}
    \bG(\u(s)) \, {\rm d}\bW(s)
  \end{equation*}
  that is by definition a mild solution of \eqref{eq:intro-1}.
\end{theorem}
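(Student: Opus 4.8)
The plan is to read Theorem \ref{th:2} as a consequence of Theorem \ref{th:1} via the by-now classical fixed-point theory for semilinear stochastic evolution equations in Hilbert spaces (as in \cite[Chapter 7]{dpz:stochastic}). Theorem \ref{th:1} supplies exactly the structural input needed: $\bA$ generates a strongly continuous semigroup $\ee^{t\bA}$ which is a contraction, so $\|\ee^{t\bA}\|\le 1$ for all $t\ge 0$, and, as noted after Theorem \ref{th:1}, the semigroup is Hilbert--Schmidt; moreover, from the $\mathcal O(n^2)$ asymptotics of the eigenvalues of $\bA$ (which come out of the spectral analysis behind Theorem \ref{th:1}) one has $\int_0^T\|\ee^{t\bA}\|_{HS}^2\,{\rm d}t<\infty$. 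The remaining ingredients are the regularity of the coefficients: $\bF(t,\cdot)$ and $\bG(t,\cdot)$ must be Lipschitz and of linear growth, uniformly in $t\in[0,T]$.

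First I would check these coefficient hypotheses. Writing $\u=\begin{pmatrix}u\\v\end{pmatrix}$, the drift has the block form $\bF(t,\u)=\begin{pmatrix}F(t,u)\\0\end{pmatrix}$ with $F$ the Nemytskii operator associated to $f$, while the diffusion is block diagonal, $\bG(t,\u)=\begin{pmatrix}G(t,u)&0\\0&h(t)\end{pmatrix}$, with $G$ the multiplication operator associated to $g$ and $h(t)={\rm diag}(h_0(t),h_1(t))$. Since $f,g$ are assumed globally Lipschitz and bounded (hence of linear growth) in the last variable uniformly in $(t,x)$, the maps $F$ and $G$ are Lipschitz and of linear growth on $L^2(0,1)$, and since $h$ is bounded and measurable the boundary block contributes only a bounded measurable term; thus $\bF,\bG$ are Lipschitz with linear growth on $\Chi$. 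The key point is that the unbounded coupling $C$ is absorbed into $\bA$, not into $\bF$ or $\bG$, which is precisely what makes the abstract reformulation \eqref{eq:intro-1} fit the standard theory.

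Then I would run the contraction argument. On the Banach space $\mathcal H:=L^2_F(0,T;\Chi)$ of predictable square-integrable processes define $\Gamma$ by
\[
  (\Gamma\u)(t)=\ee^{t\bA}\begin{pmatrix}u_0\\v_0\end{pmatrix}+\int_0^t\ee^{(t-s)\bA}\bF(s,\u(s))\,{\rm d}s+\int_0^t\ee^{(t-s)\bA}\bG(s,\u(s))\,{\rm d}\bW(s).
\]
Well-posedness: the deterministic convolution is controlled by the contractivity of $\ee^{t\bA}$, the linear growth of $\bF$ and Cauchy--Schwarz; for the stochastic convolution one checks, using the ideal property of Hilbert--Schmidt operators, that $s\mapsto\ee^{(t-s)\bA}\bG(s,\u(s))$ is square-integrable in the Hilbert--Schmidt norm --- this uses $\int_0^T\|\ee^{r\bA}\|_{HS}^2\,{\rm d}r<\infty$ together with the boundedness of $\bG$ --- so the Itô integral against the cylindrical process $\bW$ exists and satisfies the Itô isometry. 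Next, for $\u,\u'\in\mathcal H$, combining the Itô isometry, $\|\ee^{t\bA}\|\le 1$ and the Lipschitz bounds yields
\[
  \E\|(\Gamma\u)(t)-(\Gamma\u')(t)\|_\Chi^2\le C_T\int_0^t\E\|\u(s)-\u'(s)\|_\Chi^2\,{\rm d}s .
\]
Passing to the equivalent norm $\|\u\|_\beta^2=\sup_{t\le T}\ee^{-\beta t}\,\E\|\u(t)\|_\Chi^2$ and choosing $\beta$ large makes $\Gamma$ a strict contraction on $(\mathcal H,\|\cdot\|_\beta)$; Banach's fixed point theorem then gives a unique $\u\in\mathcal H$ with $\Gamma\u=\u$, which is by definition the mild solution, and a Gronwall estimate for $\E\|\u(t)\|^2$ confirms $\u\in L^2_F(0,T;\Chi)$.

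The main obstacle --- mild here --- is the well-posedness of the infinite-dimensional stochastic convolution: one must integrate the space--time noise $W$ against a Hilbert--Schmidt-valued integrand, and it is exactly here that the Hilbert--Schmidt property of $\ee^{t\bA}$ obtained as a by-product of Theorem \ref{th:1} is decisive (the finite-dimensional boundary noise $V$ poses no such issue). Everything else --- predictability and time continuity of $\Gamma\u$, and the step from small $T$ to arbitrary $T$ --- is routine and could equally well be bypassed by simply verifying the hypotheses of \cite[Theorem 7.4]{dpz:stochastic} and invoking it directly.
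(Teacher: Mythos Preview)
Your proposal is correct and follows essentially the same route as the paper: the paper verifies that $\bF$ is Lipschitz and that $\ee^{s\bA}\bG(t,\cdot)$ is Hilbert--Schmidt with the bound $|\ee^{s\bA}\bG(t,\u)|_{L_2(\cX)}\le L\,s^{-1/4}(1+|\u|_\cX)$ (obtained, exactly as you do, from $|\ee^{s\bA}|_{L_2}^2\approx s^{-1/2}$ and the boundedness of $\bG$ in $L(\cX)$), and then invokes the black-box existence result \cite[Theorem 7.4]{dpz:stochastic} (equivalently \cite[Theorem 5.3.1]{DaPZa}) rather than writing out the contraction argument you sketch.
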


\vskip1\baselineskip 

The
abstract semigroup setting we propose in this paper allows to obtain
an optimal control synthesis for the above evolution problem with
boundary control and noise.   This means
that we assume a boundary dynamics of the form:
\begin{equation}\label{bcond.controllointro}
  \partial_t v(t) = b v(t) - \partial_\nu u(t,\cdot) + h(t) [z(t) +
  \dot V(t)]
\end{equation}
where $z(t)$ is the control process and takes values in a given subset of $\R^2$. 

As before, we can write the system -- defined by the internal
evolution problem (\ref{dyn-int-intro}) and the dynamical boundary
conditions described by (\ref{bcond.controllointro}) -- in the
following abstract form
\begin{equation}\label{state equationintro}
  \begin{cases}
  {\rm d}\textbf{u}_t^z = \bA \textbf{u}_t^z \, {\rm d}t+
  \bF(t,\textbf{u}_t^z) \, {\rm d}t +\bG(t,\textbf{u}_t^z)[ Pz_t \,
  {\rm d}t + {\rm d}\mathbf{W}_t] \\
  \textbf{u}_{t_0}=\textbf{u}_0.
  \end{cases}
\end{equation}
$P: \R^2 \to \cX$ denote the immersion of the boundary
space in the product space $\cX = L^2(0,1) \times \R^2$.

The aim is to choose a control process $z$, within a set of admissible controls, in such way to minimize a cost functional of the form
\begin{equation}
  J(t_0,u_0,z)=\E \int_{t_0}^T \lambda(s,\mathbf{u}_s^z,z_s)) \, {\rm d}s + \E
  \phi(\mathbf{u}_T^z)
\end{equation}
where $\lambda$ and $\phi$ are given real functions. In our setting, altough the control lives in a finite
dimensional space, we obtain an abstract optimal control problem in
infinite dimensions. Such type of problems has been exhaustively
studied by Fuhrman and Tessitore in \cite{FT1}. The control problem is
understood in the usual weak sense (see \cite{FlSo}).  We prove that
if $f$ and $g$ are sufficiently regular then the abstract control
problem, under suitable assumptions on $\lambda$ and $\phi$, can be
solved and we can characterize optimal controls by a feedback law (see
Theorem \ref{maincontrollo} and compare Theorem 7.2 in \cite{FT1}).

\begin{theorem}
  In our assumptions, there exists an admissible control $\{\bar z_t,\ t \in [0,T]\}$ taking values in a bounded subset of $\R^2$, such that the closed loop
  equation:
  \begin{equation}
    \begin{cases}
      d\overline{\u}_\tau =
    \bA\overline{\u}_\tau\; d\tau
    +\bG(\tau,\overline{\u}_\tau)P\Gamma(\tau, \overline{\u}_{\tau},
    \bG(\tau, \overline{\u}_{\tau })^*\nabla_{x}v(\tau ,
    \overline{\u}_{\tau }))\; d\tau
    \\ 
    \phantom{d\overline{\u}_\tau =
    \bA\overline{\u}_\tau\; d\tau
    }+\bF(\tau,\overline{\u}_\tau)\;
    d\tau+\bG(\tau,\overline{\u}_\tau)\; d\mathbf{W}_\tau,\qquad
    \tau\in [t_{0},T],
    \\ 
    \overline{\u}_{t_{0}} = \u_{0}\in \Chi.
    \end{cases}
  \end{equation}
  admits a solution and the
  couple $(\overline{z},\overline{\u})$ is optimal for the control
  problem.
\end{theorem}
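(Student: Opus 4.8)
The plan is to fit the problem into the Fuhrman--Tessitore framework \cite{FT1}: associate to the cost functional a Hamilton--Jacobi--Bellman equation, solve it through a backward stochastic differential equation, extract from its solution a fundamental relation that yields simultaneously the lower bound $J\ge v$ and the optimal feedback, and finally construct the closed-loop trajectory by a Girsanov change of measure. To begin, fix $(t_0,u_0)=(t_0,x)$ and consider the \emph{uncontrolled} version of \eqref{state equationintro}, i.e.\ the equation of Theorem \ref{th:2} started from $x$ at time $t_0$, with unique mild solution $\u^{t_0,x}$. Using the analyticity, self-adjointness and compactness of $\ee^{t\bA}$ from Theorem \ref{th:1}, I would establish Gâteaux differentiability of $x\mapsto\u^{t_0,x}_\tau$ on $\Chi$ together with the integrability near $\tau=t_0$ of $\tau\mapsto\|\bG(\tau,\u^{t_0,x}_\tau)^*\nabla_x\u^{t_0,x}_\tau\|$, the necessary parabolic smoothing being furnished by the estimates $\|(-\bA)^{\alpha}\ee^{t\bA}\|\le C_\alpha\, t^{-\alpha}$ for the analytic semigroup; this is what will make the gradient $\nabla_x v$ below meaningful even though the control and the noise act only through the finite-rank channel $P$.

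Next I introduce the Hamiltonian. Since the control enters \eqref{state equationintro} only through $\bG(t,x)P$ (so the structure condition of \cite{FT1} holds trivially), set, for $x,k\in\Chi$,
\[
\psi(t,x,k)=\inf_{z\in\cZ}\bigl\{\lambda(t,x,z)+\langle k,Pz\rangle_\Chi\bigr\},
\]
and let $\Gamma(t,x,k)$ be a Borel-measurable minimizer; a bounded measurable selection exists because $\cZ\subset\R^2$ is bounded and $\lambda$ is continuous, and the assumptions on $\lambda$ make $\psi$ Lipschitz in $(x,k)$. Then I consider the Markovian BSDE driven by the reference process,
\[
Y^{t_0,x}_\tau=\phi(\u^{t_0,x}_T)+\int_\tau^T\psi\bigl(s,\u^{t_0,x}_s,Z^{t_0,x}_s\bigr)\,ds-\int_\tau^T Z^{t_0,x}_s\,d\bW_s,\qquad \tau\in[t_0,T],
\]
which has a unique solution $(Y,Z)$ in the usual $L^2_F$ spaces by the Lipschitz property of $\psi$. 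Setting $v(t_0,x):=Y^{t_0,x}_{t_0}$ defines a deterministic function on $[0,T]\times\Chi$, and the nonlinear Feynman--Kac identification of \cite{FT1} gives $Y^{t_0,x}_\tau=v(\tau,\u^{t_0,x}_\tau)$ and, crucially, $Z^{t_0,x}_\tau=\bG(\tau,\u^{t_0,x}_\tau)^*\nabla_x v(\tau,\u^{t_0,x}_\tau)$, with $\nabla_x v$ the generalized gradient produced by the flow differentiability above; in particular $v$ is a mild solution of the HJB equation $\partial_t v+\mathcal L_t v+\psi(t,\cdot,\bG^*\nabla_x v)=0$, $v(T,\cdot)=\phi$, $\mathcal L_t$ being the generator of the reference process.

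With $v$ in hand I would prove the fundamental relation. Given any admissible control system $(\Omega,\F,\F_t,\Pr,\bW,z)$ with state $\u^z$, a Girsanov change of measure turning $\bW_t+\int_{t_0}^t Pz_s\,ds$ into a cylindrical Wiener process transfers the reference BSDE to the trajectory $\u^z$; reading the resulting identity back under $\Pr$ and taking expectations yields
\[
J(t_0,u_0,z)=v(t_0,u_0)+\E\!\int_{t_0}^T\!\Bigl[\lambda(s,\u^z_s,z_s)+\langle \bG(s,\u^z_s)^*\nabla_x v(s,\u^z_s),Pz_s\rangle-\psi\bigl(s,\u^z_s,\bG(s,\u^z_s)^*\nabla_x v(s,\u^z_s)\bigr)\Bigr]ds.
\]
By the definition of $\psi$ the integrand is nonnegative, hence $J(t_0,u_0,z)\ge v(t_0,u_0)$, with equality precisely when $z_s=\Gamma\bigl(s,\u^z_s,\bG(s,\u^z_s)^*\nabla_x v(s,\u^z_s)\bigr)$ for a.e.\ $s$, $\Pr$-a.s. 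To produce an optimal pair I then build a \emph{weak} solution of the closed-loop equation: since $\Gamma$ is $\cZ$-valued (hence bounded) and $\bG$ is bounded, Girsanov's theorem provides a probability $\bar\Pr$ under which $\bar\bW_\tau:=\bW_\tau+\int_{t_0}^\tau P\,\Gamma\bigl(s,\u_s,\bG(s,\u_s)^*\nabla_x v(s,\u_s)\bigr)\,ds$ is a cylindrical Wiener process, and then the reference process $\u$ solves exactly the displayed closed-loop system driven by $\bar\bW$; setting $\bar z_s:=\Gamma\bigl(s,\bar\u_s,\bG(s,\bar\u_s)^*\nabla_x v(s,\bar\u_s)\bigr)$ gives $J(t_0,u_0,\bar z)=v(t_0,u_0)$, so $(\bar z,\bar\u)$ is optimal.

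The main obstacle is the BSDE step in this degenerate, infinite-dimensional setting: one must rigorously justify the differentiability of the reference flow $x\mapsto\u^{t_0,x}$ and the integrability near $t_0$ of $\nabla\bG(\cdot,\u^{t_0,x})$, so that the identification $Z=\bG^*\nabla_x v$ is legitimate and $v$ has enough regularity for the Hamiltonian argument to make sense. This is exactly where Theorem \ref{th:1} is essential: analyticity of $\ee^{t\bA}$, together with its Hilbert--Schmidt/compactness properties, supplies the smoothing that compensates for the control and the noise acting only through the finite-dimensional subspace $P\R^2$. Everything else — the Lipschitz bounds on $\psi$, the measurable selection $\Gamma$, the Girsanov constructions, and the verification of the fundamental relation — is routine once the hypotheses of \cite{FT1} have been checked.
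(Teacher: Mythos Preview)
Your proposal is correct and follows the same route as the paper: both reduce the problem to the Fuhrman--Tessitore framework \cite{FT1}. The paper's proof is more economical, however. Rather than reconstructing the BSDE/HJB/Girsanov machinery you sketch (which is precisely the content of \cite[Theorem 7.2]{FT1}), the paper simply verifies the hypotheses of that theorem and invokes it as a black box: Proposition~\ref{pr:bA-genera} gives the semigroup generation, Proposition~\ref{F-G} gives the Lipschitz and Hilbert--Schmidt estimates on $\bF$ and $e^{s\bA}\bG$ (with the $s^{-1/4}$ singularity coming from Theorem~\ref{th:sgr-hs}), Proposition~\ref{gateaux} gives the G\^ateaux differentiability $\bF(t,\cdot)\in\mathcal G^1(\cX,\cX)$ and $e^{s\bA}\bG(t,\cdot)\in\mathcal G^1(\cX,L_2(\cX))$, and Assumption~\ref{hpcontrollo} handles $\lambda$, $\phi$, $\psi$, $\Gamma$. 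Your outline is a faithful unpacking of what \cite{FT1} does internally, so nothing is lost; you just do more work than the paper needs to display.

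One small slip: you write that ``the control and the noise act only through the finite-dimensional subspace $P\R^2$''. In fact only the control enters through $P$; the noise $\bW=(W,V)$ is a genuine cylindrical Wiener process on all of $\cX=L^2(0,1)\times\R^2$. This does not affect your argument (the structure condition of \cite{FT1} concerns only the control drift lying in the range of $\bG$, which holds trivially here), but the smoothing you invoke is needed because $\bG$ is merely bounded in $L(\cX)$ and one must absorb the Hilbert--Schmidt burden into $e^{s\bA}$, not because the noise is finite-rank.
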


Stochastic boundary value problems are already present in the
literature, see the paper \cite{maslowski} and the references therein;
in those papers, the approach to the solution of the system is more
similar to that in \cite{chueshov/schmalfuss}. We also need to mention
the paper \cite{debussche/fuhrman/tessitore} for a one dimensional
case where the boundary values are set equal to a white noise
mapping.

\section{Generation properties}\label{sez.det.op}

Let $X = L^2(0,1)$ be the Hilbert space of square integrable real
valued functions defined on $D = [0,1]$ and $\Chi = X \times \R^2$. In
this section we consider the following initial-boundary value problem
on the space $\Chi$
\begin{equation}\label{sistema}
  \begin{cases}
    \frac{d}{dt} u(t) = A_m u(t) \\
    v(t) = Lu(t) \\
    \frac{d}{dt} v(t) = B v(t) - C u(t) \\
    u(0) = u_0 \in X, \quad v(0) = v_0 \in \R^2.
  \end{cases}
\end{equation}
In the above equation, $A_m$ is an unbounded operator with maximal domain 
\begin{equation*}
  A_m = \partial^2_x, \quad D(A_m) = H^2(0,1);
\end{equation*}
$B$ is a diagonal matrix with negative entries $(-b_0,-b_1)$. 

Let $C: D(C) \subset X \to \partial X$ the feedback operator,
defined on $D(C) = H^1(0,1)$ as
\begin{equation*}
  Cu = \begin{pmatrix}\partial_x u(0) \\ -\partial_x u(1)\end{pmatrix}.
\end{equation*}

The boundary evaluation operator $L$ is the mapping $L:X \to \R^2$
given by
\begin{equation*}
  Lu = \begin{pmatrix} u(0) \\ u(1) \end{pmatrix}.
\end{equation*}
Its inverse is the Dirichlet mapping $D^{A,L}_\lambda: \R^2 \to
D(A_m)$
\begin{equation*}
  D^{A,L}_\lambda \phi = u(x) \in D(A_m):\qquad \begin{cases} (\lambda
  I - A_m) u(x) = 0, \\ L u = \phi. \end{cases}
\end{equation*}

\smallskip

As proposed in \cite{KrMuNa}, we define a {\em mild solution} of
(\ref{sistema}) a function $\u \in C([0,T];\Chi)$ such that
\begin{equation*}
  \begin{cases}
    u(t) = u_0 + A_m \int_0^t u(s) \, {\rm d}s, & t \in [0,T] \\
    v(t) = v_0 + B \int_0^t v(s) \, {\rm d}s + C \int_0^t u(s) \, {\rm d}s.
  \end{cases}
\end{equation*}

In order to use semigroup theory to study equation (\ref{sistema}), we
consider a matrix operator describing the evolution with feedback on
the boundary
\begin{equation*}
  \bA = \begin{pmatrix} A_m & 0 \\ C & B \end{pmatrix}
\end{equation*}
on the domain
\begin{equation*}
  D(\bA) = \{\u \in D(A_m) \times \R^{2} \,:\, L u = v\}.
\end{equation*}
Then a mild solution for equation (\ref{sistema}) exists if and only
if $\bA$ is the generator of a strongly continuous semigroup.

\smallskip

The above definition of the domain $D(\bA)$ puts in evidence the
relation between the first and the second component of the vector
$\u$. There is a different characterization that is sometimes useful
in the applications.

Let us define the operator $A_0$ as $A_0 = A_m$ on $D(A_0) = \{u \in
D(A_m) \,:\, L u = 0\}$. We can then write the domain of $\bA$ as
\begin{equation*}
  D(\bA) = \{\u \in D(A_m) \times \partial X \,:\, u - D^{A,L}_0 v \in D(A_0)\}.
\end{equation*}
The operator $\bA$ can be decomposed as the product
\begin{equation*}
  \bA = \begin{pmatrix} A & 0 \\ 0 & B \end{pmatrix} 
  \begin{pmatrix} I & - D^{A,L}_0 \\ C & I \end{pmatrix}
\end{equation*}
Then, according to {\tt Engel} \cite{En99}, $\bA$ is called a one-sided
$K$-coupled matrix-valued operator.

\subsection*{Proof of Theorem \ref{th:1}}

In this section we apply form theory in order to prove generation
property of the operator $\bA$, compare the monograph \cite{Ouh}.

\begin{proposition}\label{pr:bA-genera}
  $\bA$ is the infinitesimal generator of a strongly continuous,
  analytic semigroup of contractions, self-adjoint and compact.
\end{proposition}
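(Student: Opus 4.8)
The plan is to realise $-\bA$ as the self-adjoint operator associated with a symmetric, bounded, nonnegative and closed sesquilinear form on $\Chi$, and then to read off every claimed property from form theory (as in \cite{Ouh}) together with Rellich's compact embedding theorem.

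First I would introduce the form domain
\[
V = \{\u = (u,v) \in H^1(0,1)\times\R^2 \,:\, Lu = v\},
\]
which, because $v$ is completely determined by $u$ through the trace operator $L$, is isometric to $H^1(0,1)$, and on it the form
\[
\mathfrak a(\u,\tilde\u) = \int_0^1 u'(x)\,\overline{\tilde u'(x)}\,{\rm d}x + b_0\,v_0\,\overline{\tilde v_0} + b_1\,v_1\,\overline{\tilde v_1}.
\]
Since $b_0,b_1>0$, $\mathfrak a$ is symmetric and nonnegative; it is bounded on $V$, and the graph norm $\bigl(\mathfrak a(\u,\u)+\|\u\|_\Chi^2\bigr)^{1/2}$ is equivalent to the $H^1(0,1)$-norm (here one uses the continuity of the trace $L:H^1(0,1)\to\R^2$), so $\mathfrak a$ is closed; finally $V$ is dense in $\Chi=X\times\R^2$, as one sees by approximating an arbitrary $(f,w)$ by $\bigl(g_n+\psi,\,L(g_n+\psi)\bigr)$ with $\psi$ a fixed smooth function carrying the boundary values $w$ and $g_n\in C_c^\infty(0,1)$ approximating $f-\psi$ in $X$.

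Next I would identify the operator $\mathcal A$ associated with $\mathfrak a$ with $-\bA$. For $\u\in D(\bA)$ and any $\tilde\u\in V$ an integration by parts gives
\[
\langle\bA\u,\tilde\u\rangle_\Chi = \langle A_m u,\tilde u\rangle_X + \langle Cu+Bv,\tilde v\rangle_{\R^2} = -\int_0^1 u'\,\overline{\tilde u'}\,{\rm d}x - b_0 v_0\overline{\tilde v_0} - b_1 v_1\overline{\tilde v_1} = -\mathfrak a(\u,\tilde\u),
\]
the boundary terms $[u'\tilde u]_0^1$ produced by $A_m$ cancelling \emph{exactly} against the feedback term $Cu$ once $\tilde v = L\tilde u$ is used; hence $D(\bA)\subseteq D(\mathcal A)$ and $\mathcal A=-\bA$ there. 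Conversely, for $\u\in D(\mathcal A)$, testing $\mathfrak a(\u,\cdot)$ against $\tilde u\in C_c^\infty(0,1)$ yields $u''\in X$, i.e. $u\in H^2(0,1)=D(A_m)$, and a further integration by parts against a general $\tilde\u\in V$ forces both $Lu=v$ (so $\u\in D(\bA)$) and $\mathcal A\u=-\bA\u$; therefore $\mathcal A=-\bA$ exactly.

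Finally I would conclude: by Kato's first representation theorem $-\bA=\mathcal A$ is self-adjoint and nonnegative; by the generation theorem for symmetric accretive forms, $\bA$ generates an analytic $C_0$-semigroup of contractions $\ee^{t\bA}$ (analyticity in a sector since $\mathfrak a$ is symmetric, contractivity since $\mathfrak a$ is accretive). For compactness, $V\cong H^1(0,1)$ embeds compactly in $X$ by Rellich's theorem and the $\R^2$-component is finite dimensional, so $V\hookrightarrow\Chi$ is compact; thus $\bA$ has compact resolvent and, being analytic, $\ee^{t\bA}$ is compact for every $t>0$. The same two facts give the orthonormal basis of eigenfunctions mentioned after the statement, and the Hilbert--Schmidt property of $\ee^{t\bA}$ follows once the eigenvalues are seen to grow like $n^2$. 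The only genuinely delicate point is the identification $\mathcal A=-\bA$: one must check that the boundary terms from the integrations by parts reproduce \emph{precisely} the matrix structure $\begin{pmatrix}A_m&0\\C&B\end{pmatrix}$ together with the constraint $Lu=v$, and introduce no spurious extra condition — this is also where it becomes visible that it is the unweighted inner product on $\Chi$, together with the negativity of $B$, that makes $\bA$ self-adjoint and dissipative.
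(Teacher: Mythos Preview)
Your proposal is correct and follows essentially the same route as the paper: introduce the symmetric nonnegative form $\mathfrak a$ on the constrained domain $V\subset H^1(0,1)\times\R^2$, verify density, closedness and continuity, identify the associated operator with $-\bA$ via integration by parts (the boundary terms cancelling against $C$), and deduce generation, self-adjointness and compactness from standard form theory (\cite{Ouh}) and the compact embedding $H^1\hookrightarrow L^2$. The only cosmetic slip is that in the converse inclusion you need not ``force'' $Lu=v$, since $\u\in D(\mathcal A)\subset V$ already carries that constraint; only $u\in H^2(0,1)$ must be extracted from the form identity.
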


We will give the proof in two steps. First of all we will consider the
following form:
\begin{equation*}\label{eq:forma}
  a(\u,\v)=\int_0^1 u'(x)v'(x) \, {\rm d}x + b_0\, u(0)\, v(0) + b_1\, u(1)\, v(1)
\end{equation*}
on the domain
\begin{displaymath}
  V=\left\{\u=(u,\alpha) \in H^1(0,1)\times \R^2\mid u(0)=\alpha_0,u(1)=\alpha_1\right\}
\end{displaymath}
and we will show that it is densely defined, closed, positive,
symmetric and continue.  Moreover, the operator associated with the
form $a$ is $(\bA,D(\bA))$ defined above. According to \cite{Ouh},
this implies that the operator $\bA$ is self-adjoint and generates a
contraction semigroup $\ee^{t\bA}$ on $\Chi$ that is analytic of angle
$\frac{\pi}{2}$.  Then we will show the self-adjointness and the
compactness of the semigroup $\ee^{t\bA}$. To see this, we will refer to
\cite{GGK}.
  
Let us begin with the properties of the form $a$.
  
\begin{lemma}
  The form $a$ is densely defined, closed, positive, symmetric and
  continue.
\end{lemma}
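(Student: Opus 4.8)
The plan is to verify the five properties of the form $a$ in the order density, symmetry, positivity, continuity (boundedness), and closedness, since each relies only on elementary Sobolev space facts plus the trace inequality on $H^1(0,1)$.

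For density, I would note that $V$ contains all pairs $(u,(u(0),u(1)))$ with $u \in C^\infty([0,1])$, and since such $u$ are dense in $H^1(0,1)$ while the map $u \mapsto (u(0),u(1))$ is continuous from $H^1(0,1)$ to $\R^2$, the graph-type subspace $V$ is dense in $\Chi = L^2(0,1)\times\R^2$ (indeed $C^\infty$ functions already exhaust a dense subset of $L^2(0,1)$, and the $\R^2$-component is pinned by the trace). Symmetry is immediate from the expression for $a$, since each of the three terms is symmetric in $\u$ and $\v$. Positivity is also immediate: $a(\u,\u) = \int_0^1 |u'(x)|^2\,{\rm d}x + b_0 u(0)^2 + b_1 u(1)^2 \ge 0$ because $b_0, b_1 > 0$.

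For continuity of $a$ on $V$ equipped with its form norm $\|\u\|_V^2 = a(\u,\u) + \|\u\|_{\Chi}^2$ (equivalently the $H^1\times\R^2$ norm), I would bound the gradient term by Cauchy--Schwarz and the two boundary terms using the continuity of the trace operator $H^1(0,1)\to\R^2$, i.e. $|u(i)| \le c\|u\|_{H^1(0,1)}$; this gives $|a(\u,\v)| \le C\|\u\|_V\|\v\|_V$. For closedness, the key point is that the form norm $\|\cdot\|_V$ is equivalent to the $H^1(0,1)\times\R^2$ norm restricted to $V$: the upper bound follows from the trace inequality exactly as in the continuity estimate, and the lower bound is trivial since $\|u'\|_{L^2}^2 \le a(\u,\u)$ and $\|u\|_{L^2}^2 + |\alpha|^2 = \|\u\|_{\Chi}^2$. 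Since $V$ is a closed subspace of $H^1(0,1)\times\R^2$ (the trace constraint $u(0)=\alpha_0$, $u(1)=\alpha_1$ is preserved under $H^1$-convergence because the trace is continuous), and the form norm is equivalent to a norm making $V$ complete, the form is closed.

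The main obstacle — though it is a mild one — is being careful that the ``constraint'' subspace $V$ is genuinely closed and that the form norm is a genuine norm (not merely a seminorm) on it: positivity of $a$ alone is degenerate along constants with zero boundary value, but there are no such nonzero elements here, and in any case closedness is phrased relative to $\|\u\|_V^2 = a(\u,\u)+\|\u\|_\Chi^2$, which is manifestly definite. So the whole lemma reduces to the trace inequality on $H^1(0,1)$ together with the completeness of $H^1(0,1)\times\R^2$, and I would present it in exactly that compressed form.
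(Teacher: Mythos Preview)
Your treatment of symmetry, positivity, continuity, and closedness is correct and matches the paper's argument essentially line for line: equivalence of the form norm with the $H^1(0,1)\times\R^2$ norm on $V$, closedness of $V$ as a subspace via continuity of the trace, and Cauchy--Schwarz for continuity.

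The density argument, however, has a genuine gap. You want to show that $V=\{(u,(u(0),u(1))):u\in H^1(0,1)\}$ is dense in $\Chi=L^2(0,1)\times\R^2$, i.e.\ that an \emph{arbitrary} pair $(u,\alpha)$ with $u\in L^2$ and $\alpha\in\R^2$ can be approximated. The facts you invoke --- density of $C^\infty([0,1])$ in $L^2$ and continuity of the trace on $H^1$ --- do not yield this. Continuity of the trace tells you nothing about whether you can simultaneously make $v$ close to $u$ in $L^2$ \emph{and} make $(v(0),v(1))$ close to a prescribed $\alpha$ that has no a priori relation to $u$. (Compare: the graph of $u\mapsto\int_0^1 u$ over smooth functions is \emph{not} dense in $L^2\times\R$, precisely because that functional is $L^2$-continuous.) What you actually need is that point evaluation is \emph{not} $L^2$-continuous, so that boundary values can be adjusted with arbitrarily small $L^2$-cost. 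The paper makes this explicit: approximate $u$ by $v\in C_c^\infty(0,1)$, then add small bump functions $\alpha_0\rho_0+\alpha_1\rho_1$ supported near the endpoints with $\rho_i(i)=1$ and $\|\rho_i\|_{L^2}$ as small as desired. This corrects the trace to exactly $\alpha$ while perturbing the $L^2$-norm negligibly. Once you insert this step (or any equivalent surjectivity-of-trace-with-small-$L^2$-norm argument), your proof is complete.
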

    
\begin{proof}
  By assumption, since $b_0$ and $b_1$ are positive real numbers, it
  follows that in particular $a$ is symmetric and positive.
      
  It is clear that $V$ is a linear subspace of $\Chi$. Observe that
  $V$ is dense in $\Chi$ if any $\u\in \Chi$ can be approximated with
  elements of $V$. Consider $(u,\alpha)\in L^2[0,1]\times \R^2$. Since
  $C^{\infty}_c[0,1]$ is dense in $L^2(0,1)$ it follows that for all
  $\varepsilon >0$ there exists $v\in C^{\infty}_c[0,1]$ such that
  \begin{equation*}
    \left|u-v\right|_{L^2[0,1]}\leq \dfrac{\varepsilon}{3}.
  \end{equation*}
  Now let $\rho_0(x)$ be a symmetric function in $C^{\infty}_c(\R)$
  with support in $B_{\varepsilon}(0)$, $\rho_0(0)=1$ and
  $\int_{\R}\rho_0(x) \, {\rm d}x = \varepsilon/3$. Finally, let
  $\rho_1(x)=\rho_0(x-1)$. Then, if we define the function $\rho = v +
  \alpha_0 \, \rho_0\Big|_{[0,1]} + \alpha_1 \, \rho_1\Big|_{[0,1]}$,
  we have:
  \begin{align*}
    \left|u-\rho\right|_{L^2[0,1]}&\leq
    \left|u-v\right|_{L^2[0,1]}
    +\left|\alpha_0\rho_0\right|_{L^2[0,1]}
    +\left|\alpha_1\rho_1\right|_{L^2[0,1]}\leq \\
    & \leq \max \left\{1,\alpha_0,\alpha_1\right\}\varepsilon.
  \end{align*} 
  Morever, $\rho(0)=\alpha_0$ and $\rho(1)=\alpha_1$. Thus
  \begin{equation*}
    \left|(u,\alpha)-(\rho,\rho(0),\rho(1))\right|_{\Chi}\leq M
    \varepsilon
  \end{equation*}
  for a suitable $M$.  This shows that $V$ is dense in $\Chi$.
      
  In order to check closedness and continuity of $a$, observe first
  that the norm induced by $a$ on the space $V$ is equivalent to the
  norm given by the inner product
  \begin{equation*}
      (\u,\v)_V=\int_0^1
      \left[u'(x)v'(x)+u(x)v(x)\right] \, {\rm d}x+u(1)v(1)+u(0)v(0). 
  \end{equation*}
  In fact, if we set  $b=b_0+b_1$, we have
  \begin{equation*}
    \left\|\u\right\|_a=\sqrt{a(\u,\u)+\left\|\u\right\|^2_V}
  \end{equation*}
  so that
  \begin{equation*}
    \left\|\u\right\|^2_a \leq 
    2\left\|u\right\|^2_{H^1(0,1)} + 2 b \left[u(0)^2+u(1)^2\right] \leq 
    \max \left\{2,2b \right\}\left\|\u\right\|^2_V.
  \end{equation*}
  Now observe that $V$ becomes a Hilbert space when equipped with the
  inner product defined above since $V$ is a closed subspace of
  $H^1(0,1)\times \R^2$. Then $a$ is closed.
      
  Finally, $a$ is continuous. To see this, take $\u,\v\in V$; then
  \begin{align*}
    \left|a(\u,\v)\right|&\leq \int_0^1
    \left|u'(x)v'(x)\right| \, {\rm d}x +
    b\left[\left|u(0)\right|\left|v(0)\right| +
      \left|u(1)\right|\left|v(1)\right|\right]  \\
    &\leq \left\|u\right\|_{H^1(0,1)}\left\|v\right\|_{H^1(0,1)} +
    b\left[\left|u(0)\right|\left|v(0)\right|+
      \left|u(1)\right|\left|v(1)\right|\right] \\
    & \leq \left\|\u\right\|_V\left\|\v\right\|_V\leq
    M\left\|\u\right\|_a\left\|\v\right\|_a
  \end{align*}
  by the Cauchy-Schwartz inequality.
\end{proof}

\begin{lemma}
  The operator associated with $a$ is $(\bA,D(\bA))$ defined above.
\end{lemma}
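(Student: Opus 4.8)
The plan is to verify that the operator $(\bA, D(\bA))$ from the excerpt is exactly the operator associated with the form $a$ on $V$. Recall that the operator $T$ associated with a densely defined, closed, positive, symmetric form $a$ on $V \subset \Chi$ is defined by: $\u \in D(T)$ with $T\u = \f$ if and only if $\u \in V$ and $a(\u,\v) = (\f,\v)_{\Chi}$ for all $\v \in V$. Since the form is symmetric, we expect $-\bA$ (rather than $\bA$ itself, given the sign conventions) to be the associated operator; I will phrase the argument so the signs match the statement of Proposition~\ref{pr:bA-genera}.

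First I would establish the inclusion $D(\bA) \subseteq D(T)$ with $T|_{D(\bA)}$ acting as $-\bA$. Take $\u = (u,\alpha) \in D(\bA)$, so $u \in H^2(0,1)$ and $Lu = \alpha$, i.e. $u(0) = \alpha_0$, $u(1) = \alpha_1$; in particular $\u \in V$. For any $\v = (v,\beta) \in V$ I would compute $a(\u,\v)$ via integration by parts:
\begin{equation*}
  \int_0^1 u'(x) v'(x)\,{\rm d}x = -\int_0^1 u''(x) v(x)\,{\rm d}x + u'(1)v(1) - u'(0)v(0).
\end{equation*}
Using $v(1) = \beta_1$, $v(0) = \beta_0$, this gives
\begin{equation*}
  a(\u,\v) = -\int_0^1 u''(x)v(x)\,{\rm d}x + \big(u'(1)+b_1 u(1)\big)\beta_1 - \big(u'(0) - b_0 u(0)\big)\beta_0.
\end{equation*}
The right-hand side must be matched with $(\f,\v)_{\Chi} = \int_0^1 f_1(x) v(x)\,{\rm d}x + f_2 \cdot \beta$ for $\f = (f_1, f_2) \in \Chi$. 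Reading off the $L^2$ part forces $f_1 = -u'' = -A_m u$, and reading off the $\R^2$ part forces $f_2 = (-u'(0)+b_0 u(0),\, u'(1)+b_1 u(1))^{\!\top}$, which is precisely $-Cu + B\alpha$ given $Cu = (\partial_x u(0), -\partial_x u(1))^\top$ and $B = \mathrm{diag}(-b_0,-b_1)$. Hence $\f = -\bA\u$, so $\u \in D(T)$ and $T\u = -\bA\u$.

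Next I would prove the reverse inclusion $D(T) \subseteq D(\bA)$. Suppose $\u = (u,\alpha) \in D(T)$, so $\u \in V$ (giving $u \in H^1(0,1)$, $u(0)=\alpha_0$, $u(1)=\alpha_1$) and there is $\f = (f_1,f_2) \in \Chi$ with $a(\u,\v) = (\f,\v)_{\Chi}$ for all $\v \in V$. Testing first against $\v = (v,0)$ with $v \in C_c^\infty(0,1) \subset V$ kills the boundary terms and yields $\int_0^1 u' v' = \int_0^1 f_1 v$ for all such $v$; by definition of the weak derivative this says $u'' = -f_1 \in L^2(0,1)$, so $u \in H^2(0,1) = D(A_m)$, and therefore $u'(0), u'(1)$ are well defined. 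Then testing against general $\v = (v,\beta) \in V$, integrating by parts as above and using the relation just obtained, all interior terms cancel and one is left with the identity $(u'(1)+b_1 u(1))\beta_1 - (u'(0)-b_0 u(0))\beta_0 = f_2 \cdot \beta$ for every $\beta \in \R^2$ (the values $v(0)=\beta_0$, $v(1)=\beta_1$ range freely). This pins down $f_2$ and shows no further constraint is imposed, so $\u \in D(A_m)\times\R^2$ with $Lu = \alpha$, i.e. $\u \in D(\bA)$, and $\f = -\bA\u$. Combining the two inclusions gives $D(T) = D(\bA)$ and $T = -\bA$, as claimed.

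The main obstacle is the careful bookkeeping in the reverse inclusion: one must choose test functions $\v \in V$ judiciously — first compactly supported interior functions to extract the regularity $u \in H^2$ and the $L^2$ identification $A_m u = f_1$, then functions with prescribed nonzero boundary traces to extract the boundary relation and confirm $Lu = \alpha$ is automatic rather than an extra constraint. One should double-check the density of $\{(v,0): v \in C_c^\infty(0,1)\}$ together with trace-realizing elements suffices to span enough of $V$, and that the sign conventions ($B$ has negative entries, $Cu$ has the stated form, $\partial_\nu = (-1)^i\partial_x$) are threaded through consistently so that the associated operator comes out as $\bA$ with the intended sign. Everything else is routine integration by parts.
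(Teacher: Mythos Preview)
Your proof is correct and follows essentially the same route as the paper: integration by parts to show $D(\bA)\subseteq D(T)$, then testing against compactly supported (equivalently $H^1_0$) functions to extract $u\in H^2$, followed by general test vectors to recover the boundary relation. One minor slip: the expression you call ``$-Cu + B\alpha$'' should read $-Cu - B\alpha = -(Cu+B\alpha)$ given $B=\mathrm{diag}(-b_0,-b_1)$, but your conclusion $\f=-\bA\u$ is nonetheless correct.
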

    
\begin{proof}
  Denote by $(\cC,D(\cC))$ the operator associated with $a$. By
  definition, $\cC$ is given by
  \begin{align*}
    D(\cC) &= \left\{\f \in V\mid \exists \g \in \Chi \: \text{s.t.}\:
      a(\f,\g) = (\g,\h)_{\Chi} \forall \h\in V \right\} \\
    \cC \f&=-\g.
  \end{align*}
      Let us first show that $\bA\subset \cC$. Take $\f\in D(\bA)$.
      Then for all $\h\in V$
      \begin{align*}
        a(\f,\h)&=\int_0^1 f'(x) h'(x) \, {\rm d}x + b_0 f(0) h(0)+ b_1 f(1) h(1) \\
        &= \left.f'(x)h(x)\right|_0^1-\int_0^1 f''(x) h(x) \, {\rm d}x +b_0 f(0) h(0) + b_1 f(1) h(1) \\
        &= f'(1)h(1)- f'(0)h(0)-\int_0^1
        f''(x) h(x) \, {\rm d}x + b_0 f(0) h(0) + b_1 f(1) h(1).
      \end{align*}
      At the same time, if we set
      $\alpha=(f(0),f(1))$, $\beta=(h(0),h(1))$, we have
      \begin{align*}
        \left(\bA\f,\h\right)&=(Af,h)_{L^2(0,1)}+(Cf+B\alpha,\beta)_{\R^2}=\\
        &=\int_0^1 f''(x) h(x) \, {\rm d}x + f'(0) h(0) - f'(1)h(1) \\
        &\quad - b_0 f(0) h(0) - b_1 f(1) h(1) = -a(\f,\g).
      \end{align*}
      The last equality shows that $\bA\subset \cC$.
      
      To check the converse inclusion $\cC\subset\bA$ take $\f\in
      D(\cC)$. By definition, there exists $\g\in \Chi$ such that
      \begin{align*}
        a(\f,\h)=(\g,\h)_{\Chi}, \quad \forall \h\in V
      \end{align*}
      that is,
      \begin{displaymath}
        \int_0^1 f'(x)h'(x) \, {\rm d}x = \int_0^1 g(x)h(x) \, {\rm d}x.
      \end{displaymath}
      Now choose $\h=(h,\alpha)\in V$ such that the function $h$ belongs to $H^1_0(0,1)$ 
      (the existence of such a function is ensured by the continuous 
      embedding of $H^1_0(0,1)$in $H^1(0,1)$). 
      Then by the last equality we cand derive that
      $f' \in H^1(0,1)$ and $g$ is the weak derivative
      of $f'$: it follows that $f'\in H^1(0,1)$ and we conclude that
      $f\in H^2(0,1)$.  Integrating by parts as in the proof of the
      first inclusion we see that
      \begin{align*}
        a(\f,\h) &= \int_0^1 f'(x)h'(x) \, {\rm d}x + b_0 f(0) h(0) +b_1 f(1) h(1) \\
        &=\left.f'(x)h(x)\right|_0^1 - \int_0^1 f''(x) h(x) \, {\rm d}x + b_0 f(0) h(0) + b_1 f(1) h(1) \\
        &=(-\bA\f,\h)=(\g,\h), \quad \forall \h\in V.
      \end{align*}
      This implies that $\bA\f=-\g$, and the proof is complete.
    \end{proof}

    \begin{corollary}
      The operator $(\bA,D(\bA))$ is self-adjoint and dissipative.
      Moreover it has compact resolvent.
    \end{corollary}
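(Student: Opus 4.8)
The plan is to deduce self-adjointness and dissipativity straight from the two preceding lemmas and the representation theorem for symmetric forms, and then to obtain compactness of the resolvent from the compact Sobolev embedding $H^1(0,1)\hookrightarrow L^2(0,1)$ lifted to the form domain $V$.

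By the second lemma, the operator canonically associated with the form $a$ is precisely $-\bA$, in the sense that $a(\f,\h)=(-\bA\f,\h)_{\Chi}$ for all $\f\in D(\bA)$, $\h\in V$. Since $a$ has been shown in the first lemma to be densely defined, closed, symmetric and positive, the representation theorem for such forms (cf. \cite{Ouh}, as already recalled before the lemmas) gives that $-\bA$ is self-adjoint and nonnegative; hence $\bA=\bA^{*}$, and from $(\bA\f,\f)_{\Chi}=-a(\f,\f)\le 0$ for every $\f\in D(\bA)$ we conclude that $\bA$ is dissipative, in fact $m$-dissipative with $\bA\le 0$. So the first two assertions require no further work.

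For the compact resolvent, the main point is that the form domain $V$ injects compactly into $\Chi=L^2(0,1)\times\R^2$. Indeed $V$ is, by the first lemma, a closed subspace of $H^1(0,1)\times\R^2$ for the inner product $(\u,\v)_V$; the inclusion $H^1(0,1)\hookrightarrow L^2(0,1)$ is compact by the one–dimensional Rellich–Kondrachov theorem, and $\R^2\to\R^2$ is compact since it is finite dimensional, so $V\hookrightarrow\Chi$ is compact (the constraint $u(i)=\alpha_i$ passes to the limit, so the limit of a bounded sequence in $V$ again lies in $V$). Now fix $\lambda\ge 1$, which lies in the resolvent set since $\bA\le 0$, and set $\f=(\lambda-\bA)^{-1}\mathbf{y}$ for $\mathbf{y}\in\Chi$. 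Taking the $\Chi$-inner product of $(\lambda-\bA)\f=\mathbf{y}$ with $\f$ and using $(\bA\f,\f)_{\Chi}=-a(\f,\f)$ gives
\[
  a(\f,\f)+\lambda\|\f\|_{\Chi}^{2}=(\mathbf{y},\f)_{\Chi}.
\]
Since the form norm $\|\cdot\|_a$ is equivalent to $\|\cdot\|_V$ on $V$ (first lemma), the left-hand side dominates $\|\f\|_V^{2}$ up to a constant, whence $\|\f\|_V^{2}\lesssim\|\mathbf{y}\|_{\Chi}\|\f\|_{\Chi}\le\|\mathbf{y}\|_{\Chi}\|\f\|_V$, i.e. $\|(\lambda-\bA)^{-1}\mathbf{y}\|_V\lesssim\|\mathbf{y}\|_{\Chi}$. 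Thus $(\lambda-\bA)^{-1}$ is bounded from $\Chi$ into $V$; composing with the compact embedding $V\hookrightarrow\Chi$ shows that $(\lambda-\bA)^{-1}$ is compact on $\Chi$, so $\bA$ has compact resolvent. Equivalently one may cite the general fact in \cite{Ouh} that a closed, positive, symmetric form whose form domain embeds compactly into the ambient space has an associated operator with compact resolvent.

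I do not anticipate a genuine obstacle: the argument merely assembles classical form theory with the one–dimensional Rellich embedding. The two points deserving a line of care are the sign bookkeeping converting ``positive form'' into ``dissipative operator'', and the a priori estimate $(\lambda-\bA)^{-1}\colon\Chi\to V$ bounded, which rests on the norm equivalence established in the first lemma. As a byproduct — and this is what is used in the sequel — $\bA$ self-adjoint with compact resolvent has spectrum reduced to a sequence of eigenvalues in $(-\infty,0]$ of finite multiplicity tending to $-\infty$, together with an orthonormal basis of $\Chi$ of eigenvectors, whence $\ee^{t\bA}$ is Hilbert–Schmidt for every $t>0$.
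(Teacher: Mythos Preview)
Your proof is correct. For self-adjointness and dissipativity you argue exactly as the paper does, namely by invoking the representation theorem for closed symmetric positive forms from \cite{Ouh} and the sign relation $(\bA\f,\f)_{\Chi}=-a(\f,\f)\le 0$.

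For the compact resolvent there is a small but genuine difference in route. The paper simply observes that $D(\bA)\subset H^2(0,1)\times\R^2$ and invokes the compact embedding of the \emph{operator} domain into $\Chi$; you instead work with the \emph{form} domain $V\subset H^1(0,1)\times\R^2$, derive the a priori bound $\|(\lambda-\bA)^{-1}\mathbf{y}\|_V\lesssim\|\mathbf{y}\|_{\Chi}$ from the identity $a(\f,\f)+\lambda\|\f\|_{\Chi}^2=(\mathbf{y},\f)_{\Chi}$ and the norm equivalence, and then compose with the compact Rellich embedding $H^1(0,1)\hookrightarrow L^2(0,1)$. The paper's argument is one line shorter because $D(\bA)$ has already been identified explicitly; your argument is more self-contained and would apply even if the operator domain were not known in closed form, which is the advantage of the form-domain approach in general.
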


    \begin{proof}
      The self-adjointness of $\bA$ follows by
      \cite{Ouh} (Proposition 1.24) and he dissipativity is obsvious. Since
      $D(\bA)\subset H^2(0,1)\times \R^2$, the operator $\bA$ has
      compact resolvent and the claim follows.
    \end{proof}
    
    Taking into account the above corollary, it follows that $\bA$
    generates a contraction semigroup $(\ee^{t\bA})_{t\geq 0}$ on $\Chi$
    that is analytic of angle $\pi/2$ and self-adjoint. Finally, by
    \cite[Corollary XIX.6.3]{GGK} we obtain that $\ee^{t\bA}$ is compact for
    all $t>0$.
    
    Thus we have just proved Proposition \ref{pr:bA-genera}.
  
  \begin{remark}\label{decspet}
    By the Spectral Theorem \cite[Chapter XIX, Corollary 6.3]{GGK} it follows that there exists an
    orthonormal basis $\left\{e_n\right\}_{n\in \mathbb{N}}$ of $\Chi$
    and a sequence $\left\{\lambda_n\right\}_{n\in \mathbb{N}}$ of
    real negative numbers $\lambda_n\leq 0$, such that $e_n\in
    D(\bA)$, $\bA e_n=\lambda_ne_n$ and $\lim\limits_{n\to
      \infty}\lambda_n=-\infty$.  Moreover, $\bA$ is given by
    \begin{align*}
      \bA \u=\sum_{n=1}^{\infty} \lambda_n (\u,e_n)e_n, \quad \u\in D(\bA)
    \end{align*}
    and 
    \begin{equation*}
      \ee^{t\bA}\u=\sum_{n=1}^{\infty} e^{\lambda_n t} (\u,e_n)e_n, \quad
      \u \in \Chi.
    \end{equation*}
  \end{remark}


\subsection{Spectral properties of the matrix operator}

We shall now apply Theorem 2.5 in {\tt Engel}\cite{En99} in order
to describe the spectrum of $\bA$. According to that result
\begin{equation}\label{spettro}
  \sigma(\bA) \subseteq \sigma(A_0) \cup \sigma(B) \cup S
\end{equation}
where
\begin{equation}
  S = \{\lambda \in \rho(A_0) \cap \rho(B) \,:\, \Det(F(\lambda)) = 0\}.
\end{equation}
The matrix $F(\lambda)$ is defined as
\begin{equation*}
  F(\lambda) = I -(\lambda - B) L_\lambda K_\lambda R(\lambda,B)
\end{equation*}
where the operators $L_\lambda$ and $K_\lambda$ are given by
\begin{equation*}
  L_\lambda = -B R(\lambda,B) R(0,B) C, \qquad
  K_\lambda = -A_0 R(\lambda,A_0) D^{A,L}_0.
\end{equation*}
Notice that the matrix $F(\lambda)$ can also be written as
\begin{equation*}
  F(\lambda) = I + C A_0 R(\lambda,A_0) D^{A,L}_0 R(\lambda,B).
\end{equation*}

\begin{remark}
  In case when the feedback operator matrix $C$ is identically zero, the
  above construction implies that $S = \emptyset$.
\end{remark}

\subsection*{Determining the set $S$}
In the following, we construct explicitly the set $S$. The idea is
to construct the matrix $F(\lambda)$ and compute its determinant.

We have to distinguish two cases. If $\lambda<0$ we have
\begin{equation*}
  {\rm Det}(F(\lambda)) = 1 + \sqrt{-\lambda}
  \frac{\cos(\sqrt{-\lambda})}{\sin(\sqrt{-\lambda})}
  \left(\frac{1}{\lambda+b_0} + \frac{1}{\lambda+b_1}\right) +
  \frac{\lambda}{(\lambda+b_0)(\lambda+b_1)}
\end{equation*}

We note that the equation ${\rm Det}(F(\lambda))=0$ has infinite
 solutions $\{\lambda_j\}_{j \in \mathbb{N}}$ and every $\lambda_j$ belongs
 to the interval $(-\pi^2(j+1)^2,-\pi^2j^2)$.

Each $\lambda_j$ is eigenvalue of the operator $\bA$
corresponding to the eigenfunction $\phi_j=(e_j(x),e_j(0),e_j(1))$
where
$$e_j(x)=\frac{\sqrt{-\lambda_j}B_j}{b_0 +
\lambda_j}\cos\sqrt{-\lambda_j}x +B_j \sin\sqrt{-\lambda_j}x.$$
for a normalizing constant $0 < B_j <
\frac{1+\sqrt{-\lambda_j}}{-1+\sqrt{-\lambda_j}}$.

 If $\lambda > 0$ then
\begin{equation*}
{\rm Det}(F(\lambda)) = 1+ \sqrt{\lambda}
\frac{\left(1+\ee^{2\,\sqrt{\lambda}}\right)}{\big(-1+{{\ee}^{2\,{\sqrt{\lambda}}}}\big)\,
}\left( \frac{1}{b_0+\lambda} + \frac{1}{b_1+\lambda} \right) +
\frac{\lambda}{(b_0+\lambda)\, (b_1+\lambda)}.
\end{equation*}

We note that ${\rm Det}(F(\lambda))>0$ for every $\lambda >0$.
This means that there are not elements $\lambda$ strictly positive
in $S$. Moreover the eigenvalues of $\bA$ in $S$ are all negative.

\vskip1\baselineskip

\begin{remark}\label{R4}
  It is possible to verify directly with some computation that the
  eigenvalues of $A$ are not eigenvalues of $\bA$. 

  Further, the same happens in general with the eigenvalues of $B$,
  except in case $b_0$ and $b_1$ satisfy an explicit relation. In any
  case, also if $b_0$ and $b_1$ happen to belong to $\sigma(\bA)$,
  they are in a finite number and do not affect its behaviour.

  Therefore, with no loss of generality, in the following we may and
  do assume that all the eigenvalues of $\bA$ are contained in $S$.
\end{remark}

\vskip1\baselineskip

\begin{theorem}\label{th:sgr-hs}
  In the above assumptions
  the semigroup $\ee^{t\bA}$ is Hilbert-Schmidt, that is,
  \begin{equation}\label{HS}
    \sum_{i=1}^\infty |\ee^{t\bA}\phi_i|_{L^2(0,1)\times \R^2}^2 < \infty
  \end{equation}
  for any orthonormal basis $\{\phi_i\}$ of $L^2(0,1) \times \R^2$.
\end{theorem}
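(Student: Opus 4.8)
The plan is to exploit self-adjointness: since $\bA$ is self-adjoint with a complete orthonormal system of eigenfunctions, the quantity on the left of \eqref{HS} does not actually depend on the chosen basis, and it can be computed once and for all in the eigenbasis. First I would recall from Remark \ref{decspet} that there is an orthonormal basis $\{e_n\}_{n\in\mathbb{N}}$ of $\Chi$ with $\bA e_n=\lambda_n e_n$, $\lambda_n\le 0$, $\lambda_n\to-\infty$, and $\ee^{t\bA}\u=\sum_n \ee^{\lambda_n t}(\u,e_n)_\Chi e_n$; in particular $\ee^{t\bA}$ is self-adjoint with $\ee^{t\bA}e_n=\ee^{\lambda_n t}e_n$. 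For an arbitrary orthonormal basis $\{\phi_i\}$ of $\Chi$, applying Parseval with respect to $\{e_n\}$, then self-adjointness of $\ee^{t\bA}$ together with Tonelli's theorem (all terms are nonnegative, so the double sum may be reordered), and finally Parseval with respect to $\{\phi_i\}$, one gets
\[
  \sum_{i=1}^\infty |\ee^{t\bA}\phi_i|_\Chi^2
  = \sum_{i=1}^\infty\sum_{n=1}^\infty |(\phi_i,\ee^{t\bA}e_n)_\Chi|^2
  = \sum_{n=1}^\infty \ee^{2\lambda_n t}\sum_{i=1}^\infty |(\phi_i,e_n)_\Chi|^2
  = \sum_{n=1}^\infty \ee^{2\lambda_n t},
\]
where the last step uses $|e_n|_\Chi=1$. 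Hence \eqref{HS} holds for one (equivalently, every) orthonormal basis if and only if $\sum_n \ee^{2\lambda_n t}<\infty$.

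Next I would invoke the explicit localization of $\sigma(\bA)$ obtained above. By Remark \ref{R4} we may assume every eigenvalue of $\bA$ lies in $S$, and it was shown that the eigenvalues $\{\lambda_j\}_{j\in\mathbb{N}}$ of $\bA$ satisfy $\lambda_j\in(-\pi^2(j+1)^2,-\pi^2 j^2)$; in particular $\lambda_j<-\pi^2 j^2$ for every $j$. Therefore, for each fixed $t>0$,
\[
  \sum_{j=1}^\infty \ee^{2\lambda_j t}\;\le\;\sum_{j=1}^\infty \ee^{-2\pi^2 j^2 t}\;<\;\infty,
\]
the last series being convergent (it is dominated term by term by a convergent geometric series, being a theta-type series in $\ee^{-2\pi^2 t}$). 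Combined with the identity of the previous paragraph, this yields \eqref{HS} for every orthonormal basis $\{\phi_i\}$ of $L^2(0,1)\times\R^2$, which is the claim.

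I do not expect a genuine obstacle: the argument rests entirely on (i) the eigenfunction expansion of Remark \ref{decspet}, which reduces the Hilbert--Schmidt property to summability of $\ee^{2\lambda_n t}$, and (ii) the one-sided bound $\lambda_j<-\pi^2 j^2$ produced by the computation of $\Det(F(\lambda))$. The only points deserving a line of care are the basis-independence step (handled by the Parseval/Tonelli computation above, valid since all summands are nonnegative) and the observation that the finitely many eigenvalues possibly contributed by $\sigma(B)$ — see Remark \ref{R4} — cannot spoil convergence, being finite in number and in any case strictly negative so that the corresponding terms $\ee^{2\lambda t}$ are bounded.
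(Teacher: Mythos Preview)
Your proof is correct and follows essentially the same route as the paper's: reduce to the eigenbasis via Remark~\ref{decspet}, then use the localization $\lambda_j\in(-\pi^2(j+1)^2,-\pi^2 j^2)$ of the eigenvalues in $S$ (together with Remark~\ref{R4}) to conclude that $\sum_j \ee^{2\lambda_j t}<\infty$. Your presentation is slightly more explicit than the paper's---you spell out the Parseval/Tonelli computation for basis-independence and use the one-sided inequality $\lambda_j<-\pi^2 j^2$ rather than the asymptotic statement---but the argument is the same.
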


\begin{proof}
In order to prove that the semigroup $\ee^{t\bA}$ is
Hilbert-Schmidt, it is enough verify the (\ref{HS}) for an
orthonormal basis.
  Let $\{\phi_i\}$ the orthonormal sequence
of eigenfunctions of the operator $\bA$ described in Remark
\ref{decspet}.
Then
   \begin{equation*}
    \sum_{i=1}^\infty |e^{t\bA}\phi_i|_{L^2(0,1)\times \R^2}^2 = \sum_{i=1}^\infty e^{2t \lambda_i}
  \end{equation*}
  where $\lambda_i$ are the eigenvalues of the operator $\bA$.
  By (\ref{spettro}) it follows that
   \begin{equation*}
     \sum_{i=1}^\infty e^{2t \lambda_i}\leq  \sum_{i:\,\lambda_i \in \sigma(A)} e^{2t \lambda_i}
     + \sum_{i:\, \lambda_i \in \sigma(B)}e^{2t \lambda_i} + \sum_{i:\, \lambda_i \in S}e^{2t \lambda_i} .
  \end{equation*}
But, by Remark \ref{R4} we have that
 \begin{equation*}
     \sum_{i=1}^\infty e^{2t \lambda_i}\leq  \sum_{i:\,\lambda_i \in \sigma(B)}e^{2t \lambda_i} + \sum_{i :\, \lambda_i \in S}e^{2t
     \lambda_i}
  \end{equation*}
and the first of the last two series is a finite sum and the
second one converges since the eigenvalues ${\lambda}_i$ in $S$
are asymptotic to $-\pi^2i^2$.

\end{proof}

\section{The abstract problem}

In this section we are concerned with problem (\ref{eq:intro-1}): we
introduce the relevant assumptions and we formulate the main existence
and uniqueness result for its solution.

Let $\mathbf{W}=(W,V)$ be the Wiener process taking values in $=
L^2(0,1) \times \R^2$. We denote $\{{\calf}_t,\ t\in [0,T]\}$ the
natural filtration of $\mathbf{W}$, augmented with the family $\caln$
of $\Pr$-null sets of $\calf_T$:
$$
\calf_t=\sigma(\mathbf{W}(s)\; :\; s\in [0,t])\vee\caln.
$$
The filtration $\{\calf_t\}$ satisfies the usual conditions.

Define $\bF : [0,T]\times \cX \rightarrow \cX$ for every $\mathbf{u} =
\begin{pmatrix}u \\ v \end{pmatrix} \in \cX$
\begin{equation*}
  \bF(t,\mathbf{u})= \bF \left(t,\begin{pmatrix} u \\v \end{pmatrix}\right) =
  \begin{pmatrix}F(t,u) \\ 0 \end{pmatrix}
\end{equation*}
where $F(t,u)(\xi)=f(t, \xi,u(\xi))$.

Let $\bG$ be the mapping $[0,T] \times \cX \rightarrow L(\cX ,\cX)$
such that, for $\mathbf{u} = \begin{pmatrix}u \\ v \end{pmatrix}$ and
$\mathbf{y} = \begin{pmatrix}y \\ \eta \end{pmatrix}$ in $\cX$,
\begin{equation*}
  \bG\left(t,\begin{pmatrix}u \\ v \end{pmatrix}\right) \cdot \begin{pmatrix}y \\
    \eta \end{pmatrix} = \begin{pmatrix} G_1(t,u) \, y \\ G_2(t,v)
    \, \eta \end{pmatrix}
\end{equation*}
where
\begin{equation*}
  (G_1(t,u)\, y)(\xi) = g(t,\xi,u(\xi))y(\xi) \quad
  \text{and} \quad (G_2(t,v)\cdot \eta)= h(t) \, \eta;
\end{equation*}
we stress that $h$ is a diagonal matrix.

Therefore, we are concerned with the following abstract problem
\begin{equation} 
  \label{eq.astratta}
  \begin{cases}
    {\rm d}\textbf{u}_t = \bA \textbf{u}_t \, {\rm d}t+
    \bF(t,\textbf{u}_t)\, {\rm d}t+ \bG(t,\textbf{u}_t) {\rm
      d}\mathbf{W}_t \\ \textbf{u}_{t_0}=\textbf{u}_0
  \end{cases}
\end{equation}
on which we formulate the following assumptions.

\begin{assumption}\label{hp-A}
  \begin{enumerate}
  \item[]
  \item[(i)] $f:[0,T] \times [0,1] \times \R \rightarrow \R$, is a
    measurable mapping, bounded and Lipschitz continuous in the last
    component
    \begin{equation*}
      |f(t,x,u)| \leq K, \quad
      |f(t,x,u)- f(t,x,v)|\leq L |u-v|.\end{equation*}
    for every $t \in [0,T]$, $x \in [0,1]$, $u,v \in \R$.
  \item [(ii)] $g:[0,T] \times [0,1] \times \R \rightarrow \R$,
    is a measurable mapping such that
    \begin{equation*}
      |g(t,x,u)| \leq K, \quad |g(t,x,u)-g(t,x,v)|\leq L |u-v|
    \end{equation*}
    for every $t \in [0,T], \, x \in [0,1], \,u,v \in \R$.
  \item [(iii)] $h: [0,T] \rightarrow M(2,2)$ is a bounded measurable
    mapping verifying $|h(t)| \leq K$ for every $t \in [0,T]$.
  \end{enumerate}
\end{assumption}

The existence and uniqueness of the solution to (\ref{eq.astratta}) is
a standard result in the literature, see for instance the monograph
\cite{dpz:stochastic}. In order to apply the known results, we shall
verify that the nonlinear coefficients $\bF$ and $\bG$ satisfy
suitable Lipschitz continuous conditions. That will be enough to prove
the existence of a \emph{mild solution} which is a process
$\textbf{u}_t$ adapted to the filtration $\mathcal{F}_t$ satisfying
the following integral equation
\begin{equation} 
  \textbf{u}_t= e^{t \bA}\textbf{u}_0
  + \int_0^t e^{(t-s) \bA}\bF(s,\textbf{u}_s) \, {\rm d}s
  + \int_0^t e^{(t-s)\bA}\bG(s, \textbf{u}_s) \, {\rm d}\mathbf{W}s.
\end{equation}

\begin{proposition}\label{F-G}
  Under Assumptions \ref{hp-A}(i)--(iii), the following hold:
  \begin{enumerate}
  \item[1.] the mapping $\bF:\cX\to \cX$ is measurable and satisfies,
    for some constant $L>0$,
    \begin{equation*}
      |\bF(t,\mathbf{u})-\bF(t,\mathbf{v})|_{\cX}\leq
      L|\mathbf{u}-\mathbf{v}|_{\cX} \quad \mathbf{u},\mathbf{v} \in \cX.
    \end{equation*}
  \item[2.] $\bG$ is a mapping $[0,T] \times \cX \to L(\cX)$ such that
\begin{enumerate}
  \item[a.]
    for every $\mathbf{v}\in \cX$ the map $\bG(\cdot,\cdot)
    \mathbf{v}: [0,T] \times \cX\to \cX$ is measurable,
\item[b.]
    $e^{s\bA}\bG(t,\mathbf{u})\in L_2(\cX)$ for every $s>0$, $t \in
    [0,T]$ and $\mathbf{u}\in \cX$, and
\item[c.] for every $s>0$, $t \in
    [0,T]$ and $\mathbf{u}. \mathbf{v} \in \cX$ we have
    \begin{align}\label{primaipotesisug}
      &|e^{s\bA}\bG(t,\mathbf{u})|_{L_2(\cX)}\leq L\, s^{-1/4} \,
        (1+|\mathbf{u}|_{\cX}),
\\
\label{primaipotesisug-ii}
      &|e^{s\bA}\bG(t,\mathbf{u})-e^{s\bA}\bG(t,\mathbf{v})|_{L_2(\cX)}\leq L\, s^{-1/4} 
        |\mathbf{u}-\mathbf{v}|_{\cX},
\\
\label{secondaipotesisug}
      &|\bG(t,\mathbf{u})|_{L(\cX)}\leq L\;   (1+|\mathbf{u}|_{\cX}), 
    \end{align}
    for a constant $L>0$.
  \end{enumerate}
  \end{enumerate}
\end{proposition}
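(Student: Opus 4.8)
The proof separates into the two items of the statement, and only the Lipschitz-type estimate \eqref{primaipotesisug-ii} requires genuine work.

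For \emph{item 1} and \emph{item 2(a)} I would use standard Nemytskii-operator arguments. Since $\bF(t,\u)=(F(t,u),0)$ with $F(t,u)(\xi)=f(t,\xi,u(\xi))$, measurability of $\bF$ on $[0,T]\times\cX$ follows from the Carath\'eodory structure (measurability in $t$, continuity in $\u$ coming from the Lipschitz bound), while Assumption \ref{hp-A}(i) gives pointwise $|f(t,\xi,u(\xi))-f(t,\xi,v(\xi))|\le L|u(\xi)-v(\xi)|$, which after integrating the square in $\xi$ and recalling that the second component of $\bF$ vanishes yields $|\bF(t,\u)-\bF(t,\v)|_{\cX}\le L|\u-\v|_{\cX}$. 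Likewise, for fixed $(y,\eta)\in\cX$ the map $(t,\u)\mapsto\bG(t,\u)(y,\eta)=(g(t,\cdot,u(\cdot))\,y,\ h(t)\eta)$ is a composition of measurable maps. For \eqref{secondaipotesisug} one only notes that Assumption \ref{hp-A}(ii)--(iii) make $\bG(t,\u)$ bounded on $\cX$: $|G_1(t,u)y|_{X}\le K|y|_{X}$ and $|h(t)\eta|\le K|\eta|$, so $|\bG(t,\u)|_{L(\cX)}\le K\le L(1+|\u|_{\cX})$ after enlarging $L$.

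For \emph{item 2(b)} and \eqref{primaipotesisug} I would invoke the spectral picture of Theorem \ref{th:1} and Remark \ref{decspet}: $\bA$ is self-adjoint with orthonormal eigenbasis $\{\phi_j\}$, $\bA\phi_j=\lambda_j\phi_j$, and by Remark \ref{R4} we may assume all $\lambda_j$ belong to $S$, so $\lambda_j\in(-\pi^2(j+1)^2,-\pi^2 j^2)$. Comparing $\sum_j e^{2s\lambda_j}$ with a Gaussian integral refines Theorem \ref{th:sgr-hs} into the quantitative bound $|e^{s\bA}|_{L_2(\cX)}^2=\sum_j e^{2s\lambda_j}\le Cs^{-1/2}$ for $s\in(0,T]$. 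Since $e^{s\bA}\bG(t,\u)$ is a Hilbert--Schmidt operator composed with a bounded one, it lies in $L_2(\cX)$ and $|e^{s\bA}\bG(t,\u)|_{L_2(\cX)}\le|\bG(t,\u)|_{L(\cX)}\,|e^{s\bA}|_{L_2(\cX)}\le CK\,s^{-1/4}$, which is \eqref{primaipotesisug} (the factor $1+|\u|_{\cX}$ being slack here). The delicate estimate is \eqref{primaipotesisug-ii}, and I expect it to be the main obstacle: $\bG(t,\u)-\bG(t,\v)$ acts as multiplication by $m(\xi):=g(t,\xi,u(\xi))-g(t,\xi,v(\xi))$ on the first coordinate and as $0$ on the second, and $m$ satisfies only $\|m\|_{X}\le L\,|\u-\v|_{\cX}$, so $\bG(t,\cdot)$ is Lipschitz into $L(\cX)$ for the $L^\infty$-norm but \emph{not} for the $L^2$-norm, and the naive submultiplicative bound of the previous paragraph fails. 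Instead I would expand in the eigenbasis $\phi_j=(e_j,e_j(0),e_j(1))$ and use self-adjointness of $e^{s\bA}$:
\begin{equation*}
  |e^{s\bA}(\bG(t,\u)-\bG(t,\v))|_{L_2(\cX)}^2 \;=\; \sum_j e^{2s\lambda_j}\,\|m\,e_j\|_{X}^2 \;\le\; \Big(\sup_j \|e_j\|_{L^\infty(0,1)}^2\Big)\,\|m\|_{X}^2\,\sum_j e^{2s\lambda_j}.
\end{equation*}

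The auxiliary fact needed is $\sup_j\|e_j\|_{L^\infty(0,1)}<\infty$, which follows from the explicit formula $e_j(x)=\tfrac{\sqrt{-\lambda_j}\,B_j}{b_0+\lambda_j}\cos(\sqrt{-\lambda_j}\,x)+B_j\sin(\sqrt{-\lambda_j}\,x)$: as $j\to\infty$ one has $B_j\to1$ and $\sqrt{-\lambda_j}/(b_0+\lambda_j)\to0$, so both coefficients are bounded uniformly in $j$. Combining this with $\sum_j e^{2s\lambda_j}\le Cs^{-1/2}$ and $\|m\|_{X}\le L|\u-\v|_{\cX}$ and taking square roots gives \eqref{primaipotesisug-ii} with a new constant. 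Thus the essential point throughout is that the missing $L^\infty$-control on $u-v$ is traded against the $s^{-1/4}$ parabolic smoothing of the analytic semigroup together with the uniform sup-bound on the eigenfunctions; everything else is bookkeeping.
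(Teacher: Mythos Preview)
Your proof is correct and follows essentially the same route as the paper: item~1 and \eqref{secondaipotesisug} via the pointwise Lipschitz/boundedness of $f,g,h$, \eqref{primaipotesisug} via $|e^{s\bA}\bG|_{L_2}\le|\bG|_{L(\cX)}|e^{s\bA}|_{L_2}$ together with $|e^{s\bA}|_{L_2}^2\approx s^{-1/2}$, and \eqref{primaipotesisug-ii} via the eigenbasis expansion combined with the uniform sup bound on the $e_j$. One small remark: from the paper you only get $0<B_j<\frac{1+\sqrt{-\lambda_j}}{-1+\sqrt{-\lambda_j}}\to 1$, hence $\sup_j B_j<\infty$, which is all you need; your stronger claim $B_j\to 1$ is not required for the argument.
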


\begin{proof}

\begin{enumerate}
\item[1.] We have, for $\mathbf{u} = \begin{pmatrix}u \\ x
  \end{pmatrix}$ and $\mathbf{v} = \begin{pmatrix} v \\ y
  \end{pmatrix}$
  \begin{equation*}
    |\bF(t,\mathbf{u})-\bF(t,\mathbf{v})|_{\cX} = |F(t,u)-F(t,v)|_X \leq
    L |u-v|_X \leq L |\mathbf{u}-\mathbf{v}|_{\cX}.
  \end{equation*}

  
\item[2.] Condition (\ref{secondaipotesisug}) follows from the
  definition of $\bG$ and the Assumptions \ref{hp-A} (ii)-(iii) on $g$
  and $h$.
  
  Now we prove condition (\ref{primaipotesisug}). Let $\{\phi_k
  \}_{k \in \mathbb{N}}$ be an orthonormal basis in $\cX$. Then
  \begin{align*}
    |e^{s \bA} \bG(t,\mathbf{u})|_{L_2(\cX)}^2 &= \sum_{j,k} |<e^{s
      \bA} \bG(t,\mathbf{u})\phi_{j},\phi_{k}>|_{\cX}^2 \\ &=
    \sum_{j,k} |< \bG(t,\mathbf{u}) \phi_{j},e^{s \bA}
    \phi_{k}>|_{\cX}^2 \\ &\leq |\bG(t,\mathbf{u})|_{L(\cX)}^2 \,
    |e^{s \bA}|_{L_2(\cX)}^2 \leq L^2(1 + |\mathbf{u}|^2_{\cX})|e^{s
      \bA}|_{L_2(\cX)}^2.
  \end{align*}
  Using Theorem \ref{th:sgr-hs}, 
  \begin{equation*}
    |e^{s \bA}|_{L_2(\cX)}^2 \approx \sum_{n=1}^\infty \ee^{-2 s n^2} \approx \frac{1}{\sqrt{s}}
  \end{equation*}
  where $f(t) \approx g(t)$ means that $f(s)/g(s) = O(1)$ as $s \to 0$;
  this verifies (\ref{primaipotesisug}).
  
  In order to prove the last statement (\ref{primaipotesisug-ii}),
  we take the orthonormal basis $\{\phi_k \}_{k \in \mathbb{N}}$
  consisting of eigenvectors of $\bA$ (see Remark \ref{decspet}). We
  recall that $\phi_k =(e_k(x),e_k(0),e_k(1))$ where
  \begin{equation*}
    e_k(x)=B_k \frac{\sqrt{-\lambda_k}}{b_0 +
    \lambda_k}\cos\sqrt{-\lambda_k}x +B_k \sin\sqrt{-\lambda_k}x.
  \end{equation*}
  We have
  \begin{multline*}
    |e^{s \bA} \bG(t,\mathbf{u})-e^{s \bA}
    \bG(t,\mathbf{v})|_{L_2(\cX)}^2 = \sum_{j,k} |<e^{s \bA}[
    \bG(t,\mathbf{u})-\bG(t,\mathbf{v})]\phi_{j},\phi_{k}>|_{\cX}^2 \\ 
    = \sum_{j,k} |< \bG(t,\mathbf{u})-\bG(t,\mathbf{v}) \phi_{j},e^{s
      \bA} \phi_{k}>|_{\cX}^2 = \sum_k e^{2s \lambda_k}
    \,|\bG(t,\mathbf{u})-\bG(t,\mathbf{v})\phi_{k}|^2 .
  \end{multline*}
  But, for $\mathbf{u} = \begin{pmatrix}u \\ x \end{pmatrix}$ and
  $\mathbf{v} = \begin{pmatrix} v \\ y \end{pmatrix}$, by the
  definition of the operator $\bG$, we have
  \begin{multline*}
    |\bG(t,\mathbf{u})-\bG(t,\mathbf{v})\phi_{k}|_{\cX}^2 = \int_0^1
    |g(t,x,u(x))-
    g(t,x,v(x))|^2|e_k(x)|^2dx \\
    \leq \int_0^1 K^2|u(x)- v(x)|^2dx \leq
    K^2|\mathbf{u}-\mathbf{v}|_{\cX}^2
  \end{multline*}
  since the function $g$ is Lipschitz and $|e_k(x)| \leq B_k$ is
  uniformly bounded in $k$.  Consequently
  \begin{multline*}
    |e^{s \bA} \bG(t,\mathbf{u})-e^{s \bA} \bG(t,\mathbf{v})|_{L_2(\cX)}
    \leq \{\sum_{k} e^{2t
      \lambda_k}\}^{1/2}K|\mathbf{u}-\mathbf{v}|_{\cX} \\ \leq|e^{s
      \bA}|_{L_2(\cX)}K|\mathbf{u}-\mathbf{v}|_{\cX}
  \end{multline*}
  which concludes the proof.
\end{enumerate}
\end{proof}

\begin{proposition} Under the assumptions \ref{hp-A}
for every $p\in [2,\infty)$
  there
exists a unique process $\mathbf{u}\in L^p(\Omega;C([0,T];\cX))$
solution of (\ref{eq.astratta}).
\end{proposition}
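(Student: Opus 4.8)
The plan is a contraction--mapping argument in a space of adapted processes, built on the smoothing property of $\ee^{t\bA}$ from Theorem \ref{th:sgr-hs} and the estimates of Proposition \ref{F-G}. For fixed $p$ consider the Banach space $\mathcal{K}^p$ of predictable processes $\mathbf{u}\colon\Omega\times[0,T]\to\cX$ with $\|\mathbf{u}\|_{\mathcal{K}^p}^p:=\E\sup_{t\in[0,T]}|\mathbf{u}_t|_{\cX}^p<\infty$, and define on it
\begin{equation*}
  \Gamma(\mathbf{u})_t = \ee^{t\bA}\mathbf{u}_0
  + \int_0^t \ee^{(t-s)\bA}\bF(s,\mathbf{u}_s)\,{\rm d}s
  + \int_0^t \ee^{(t-s)\bA}\bG(s,\mathbf{u}_s)\,{\rm d}\mathbf{W}_s.
\end{equation*}
A mild solution of \eqref{eq.astratta} is precisely a fixed point of $\Gamma$, so it is enough to check that $\Gamma$ maps $\mathcal{K}^p$ into itself and is a strict contraction for a norm equivalent to $\|\cdot\|_{\mathcal{K}^p}$ (either after inserting an exponential weight $\ee^{-\beta t}$, or, equivalently, by first solving on a short interval $[0,T_0]$ and concatenating).

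The first term is continuous in $t$ and bounded by $|\mathbf{u}_0|_{\cX}$, since $\ee^{t\bA}$ is a $C_0$--semigroup of contractions. For the deterministic convolution one combines contractivity of $\ee^{t\bA}$ with the Lipschitz bound of Proposition \ref{F-G}(1) and Hölder's inequality to obtain $\E\sup_t\big|\int_0^t \ee^{(t-s)\bA}\bF(s,\mathbf{u}_s)\,{\rm d}s\big|_{\cX}^p\le C\,T^{p-1}\!\int_0^T(1+\E|\mathbf{u}_s|_{\cX}^p)\,{\rm d}s$, with the analogous estimate for the difference of two arguments, and continuity in $t$ is clear. The delicate term is the stochastic convolution, because $\bG$ is controlled only after regularization by the semigroup and at the price of the singular factor $s^{-1/4}$ in \eqref{primaipotesisug}. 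Here the natural tool is the factorization method of Da Prato--Kwapie\'n--Zabczyk (see \cite{dpz:stochastic}): pick $\alpha\in(1/p,1/4)$, which is possible exactly when $p>4$, and write
\begin{equation*}
  \int_0^t \ee^{(t-s)\bA}\bG(s,\mathbf{u}_s)\,{\rm d}\mathbf{W}_s
  = \frac{\sin\pi\alpha}{\pi}\int_0^t (t-s)^{\alpha-1}\ee^{(t-s)\bA}Y_s\,{\rm d}s,
  \qquad
  Y_s=\int_0^s (s-r)^{-\alpha}\ee^{(s-r)\bA}\bG(r,\mathbf{u}_r)\,{\rm d}\mathbf{W}_r.
\end{equation*}
By the Burkholder--Davis--Gundy inequality in the Hilbert space $\cX$ together with \eqref{primaipotesisug},
\begin{equation*}
  \E|Y_s|_{\cX}^p \le C_p\,\E\Big(\int_0^s (s-r)^{-2\alpha-1/2}(1+|\mathbf{u}_r|_{\cX})^{2}\,{\rm d}r\Big)^{p/2}
  \le C\,\big(1+\|\mathbf{u}\|_{\mathcal{K}^p}^p\big),
\end{equation*}
the $r$--integral converging because $2\alpha+1/2<1$; hence $Y\in L^p(\Omega;L^p(0,T;\cX))$. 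Since $\alpha>1/p$, the deterministic factorization operator $g\mapsto\int_0^\cdot(\cdot-s)^{\alpha-1}\ee^{(\cdot-s)\bA}g_s\,{\rm d}s$ maps $L^p(0,T;\cX)$ continuously into $C([0,T];\cX)$, so $\Gamma(\mathbf{u})\in\mathcal{K}^p$ with a linear growth bound, and predictability of $\Gamma(\mathbf{u})$ follows from this representation via stochastic Fubini together with Proposition \ref{F-G}(2a). Replacing \eqref{primaipotesisug} by \eqref{primaipotesisug-ii} in the same computation yields $\|\Gamma(\mathbf{u})-\Gamma(\mathbf{v})\|_{\mathcal{K}^p}\le C(\beta)\,\|\mathbf{u}-\mathbf{v}\|_{\mathcal{K}^p}$ with $C(\beta)<1$ for $\beta$ large (or $T_0$ small), so $\Gamma$ has a unique fixed point, which is the desired mild solution for every $p>4$. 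The remaining range $p\in[2,4]$ is then immediate: since $(\Omega,\Pr)$ is a probability space, $L^{p'}(\Omega;C([0,T];\cX))\subset L^{p}(\Omega;C([0,T];\cX))$ whenever $p'>4>p$, so the process already constructed lies in the smaller space, and uniqueness there is the standard $p=2$ Gronwall estimate.

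The only genuinely delicate point is the stochastic convolution: one has no bound on $\bG$ into $L_2(\cX)$ by itself, but only on $\ee^{s\bA}\bG$, and with the integrable yet singular weight $s^{-1/4}$. This is exactly what forces the factorization method and confines the admissible exponent to $\alpha<1/4$, so that the clean fixed-point argument runs only for $p>4$ and the small-$p$ range must be recovered a posteriori from finiteness of the measure. Everything else is routine once Theorem \ref{th:sgr-hs} and Proposition \ref{F-G} are in hand.
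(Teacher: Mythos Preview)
Your argument is correct and is precisely the factorization proof behind the result the paper invokes; the paper's own proof does not reproduce any of this but simply checks the hypotheses and cites Theorem~5.3.1 of \cite{DaPZa} (equivalently Theorem~7.4 of \cite{dpz:stochastic}) as a black box, using Proposition~\ref{pr:bA-genera} for the semigroup and Proposition~\ref{F-G} for the estimates \eqref{primaipotesisug}--\eqref{secondaipotesisug}. In other words, you have written out the proof of the cited theorem in this particular setting rather than quoting it, so the two approaches coincide in substance though not in length.
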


\begin{proof} We can apply Theorem 5.3.1 in \cite{DaPZa}. In fact by
Proposition \ref{pr:bA-genera} the operator $\bA$ generates a
strongly continuous semigroup $\{e^{t\bA} \}$ of bounded linear
operators in the Hilbert space $\cX$. Moreover,
for this theorem to apply we need to verify that coefficients $\bF$ and $\bG$ satisfy  conditions
(\ref{primaipotesisug})---(\ref{secondaipotesisug}), which follows from Proposition
\ref{F-G}.
\end{proof}

\section{Stochastic control problem}

After some preliminaries, in this section we are concerned with an
abstract control problem in infinite dimensions. We settle the
problem in the framework of weak control problems (see \cite{FlSo}).
\linea

We aim to control the evolution of the system by the boundary.
This means that we assume a boundary dynamic of the form:
\begin{equation}\label{bcond.controllo}
  \partial_t v(t) = b v(t) -  \partial_\nu u(t,\cdot) + h(t) [z(t) +
  \dot V(t)]
\end{equation}
where $z(t)$ is the control process. We require that $z \in
L^2(\Omega \times [0,T];\R^2)$.

\linea

As in the previous section we can write the system
\begin{equation}\label{sistemacontrollo}
\begin{cases}
  \partial_t u(t,x) = \partial_x^2 u(t,x)
  + f(t,x,u(t,x)) + g(t,x,u(t,x)) \dot W(t,x)\\
   \partial_t v(t) = b v(t) -  \partial_\nu u(t,\cdot) + h(t) [z(t) +
  \dot V(t)]
  \end{cases}
\end{equation}
 in the following abstract form
\begin{equation}\label{state equation}
  {\rm d}\textbf{u}_t^z = \bA \textbf{u}_t^z \, {\rm d}t+ \bF(t,\textbf{u}_t^z) \, {\rm d}t +\bG(t,\textbf{u}_t^z)[ Pz_t \, {\rm d}t + {\rm
  d}\mathbf{W}_t] \qquad
\textbf{u}_{t_0}=\textbf{u}_0
\end{equation}
where $P: \R^2 \to \cX$ is the immersion of the boundary
space in the product space $\cX = X \times \R^2$.
Equation (\ref{state equation}), in the framework of stochastic
optimal control problem, is called the controlled state equation
associated to an admissible control system.  We recall that, in
general, fixed $t_{0}\geq 0$ and $u_{0}\in \cX$, an {\it
admissible control system} (a.c.s) is given by $(\Omega,\F,
\{\F_{t}\}_{t \geq 0}, \Pr, \{\mathbf{W}_t\}_{t \geq 0}, z)$ where
\begin{itemize}
\item $(\Omega, \F, \Pr)$ is a probability space,

\item $\{{F}_{t}\}_{t\geq 0}$ is a filtration in it, satisfying
  the usual conditions,

\item $\{\mathbf{W}_{t}\}_{ t\geq 0 }$ is a Wiener process with
values in $\cX$
  and adapted to the filtration $\{\F_{t}\}_{t \geq 0}$,

\item $z$ is a process with values in a space $K$, predictable
  with respect to the filtration $\{\F_{t}\}_{t \geq 0}$ and satisfies the
  constraint: $z(t)\in \cZ$, $\Pr$-a.s., for almost every $t\in
  [t_{0},T]$, where $\cZ$ is a suitable domain of $K$.
\end{itemize}

In our case the space $K$ coincide with $\R^2$.

\linea

To each a.c.s.\ we associate the mild solution $\mathbf{u}^z$ of
state equation the mild solution
   $\mathbf{u}^z\in C([t_{0},T];L^{2}(\Omega;\cX))$ of the state
   equation.
We introduce the functional cost
\begin{equation}\label{costo}
  J(t_0,u_0,z)=\E \int_{t_0}^T \lambda(s,\mathbf{u}_s^z,z_s)) \, {\rm d}s + \E
  \phi(\mathbf{u}_T^z)
\end{equation}

We consider the problem of minimizing the functional $J$
over all admissible control systems (which is known in the literature as the weak formulation of the control problem); any a.c.s. that minimize $J$ -if it exsts- is called optimal for the control problem.

We define in classical way the Hamiltonian function relative to
the above problem
$$\psi: [0,T] \times
\cX \times \cX \rightarrow \mathbb{R}$$ setting
\begin{equation}\label{definhamilton}\psi(t,\u,\w)= \inf_{z \in
\cZ}\{\lambda(t,\u,z) + <\w, Pz>\}
\end{equation} and we define he following set
$$\Gamma(t,\u,\w)= \{ z \in\mathcal{Z} : \lambda(t,\u,z) + <\w,
Pz >= \psi(t,\u,z)\}$$

We consider the Hamilton-Jacobi-Bellman equation associated to the
control problem
\begin{equation}\label{kolmogorovnonlineare}
  \begin{cases}\displaystyle
\frac{\partial v(t,x)}{\partial t}+\mathcal{L}_t [v(t,\cdot)](x) =
\psi (t, x,v(t,x),\bG(t,x)^*\nabla_{x}v(t,x)),\\\hskip 8cm t\in [0,T],\,
x\in \cX,\\ 
\displaystyle v(T,x)=\Phi(x).
\end{cases}
\end{equation}
where the operator $\mathcal{L}_t$ is defined by
$$
\mathcal{L}_t[\phi](x)=\frac{1}{2}{\rm Trace }\left(
\bG(t,x){\bG(x)}^*\nabla^2\phi(x)\right) + < \bA x,\nabla\phi(x)>.
$$

Under suitable assumptions, if we let $v$ denote the unique
solution of (\ref{kolmogorovnonlineare}) then we have
$J(t,x,z)\geq v(t,x)$ and the
 equality holds if and only if the following feedback law is verified
 by $z$ and $\mathbf{u}_{\sigma }^{z}$:
$$
z(\sigma)= \Gamma(\sigma,\mathbf{u}_{\sigma }^{z},
\bG(\sigma,\mathbf{u}_{\sigma
}^{z})^{*}\nabla_{x}v(\sigma,\mathbf{u}_{\sigma }^{z})).
$$
Thus, we can characterize optimal controls by a feedback law.

This class of stochastic control problems, in infinite dimensional
setting, has been studied by Fuhrman and Tessitore \cite{FT1} (We
refer to Theorem 7.2 in that paper
 for precise statements and additional results).

In order to characterize optimal controls by a feedback law we
have to require that the abstract operators $\bF$ and $\bG$
satisfy further regularity conditions.

We will prove that, under suitable assumptions on the functions
$f$ and $g$ in the problem (\ref{sistemacontrollo}), the abstract operators
fit the required conditions.

We impose that the operators $\bF$ and $\bG$ are G\^ateaux
differentiable. This notion of differentiability is weaker than
the differentiability in the Fr\'echet sense.

We recall that for a mapping $F:X\to V$, where $X$ and $V$ denote
Banach spaces, the directional
  derivative at point $x\in X$ in the direction $h\in X$
  is defined as
  $$
  \nabla F(x;h)=\lim_{s\to 0}\frac{F(x+sh)-F(x)}{s},
  $$
  whenever  the limit exists
  in the topology of $V$. $F$ is called G\^ateaux
differentiable at point $x$ if it has directional derivative in
every direction at point $x$ and there exists an element of
$L(X,V)$, denoted $\nabla F(x)$ and called G\^ateaux derivative,
such that $\nabla F(x;h)=\nabla F(x)h$ for every $h\in X$.
\begin{definition} 
We say that a mapping $F:X\to V$ belongs to the class
${\mathcal{G} }^1 (X;V)$ if it is continuous, G\^ateaux
differentiable on $X$, and $\nabla F:X\to L(X,V)$ is strongly
continuous.
\end{definition}
 The last requirement of the definition means that for every
$h\in X$ the map $\nabla F(\cdot)h:X\to V$ is continuous. Note
that  $\nabla F:X\to L(X,V)$ is not continuous in general if
$L(X,V)$ is endowed with the norm operator topology; clearly, if
this happens then $F$ is Fr\'echet differentiable on $X$.
 Membership of a map in $\mathcal{G}^1(X,V)$ may be conveniently
 checked as shown in the following lemma.
\begin{lemma}\label{staing}
  A map $F:X\to V$ belongs to $\mathcal{G}^1(X,V)$
  provided the following conditions hold:
\begin{itemize}
  \item[i)]  the directional
  derivatives $\nabla F(x;h)$ exist
  at every point $x\in X$ and in every direction $h\in X$;
  \item[ii)] for every  $h$, the mapping
   $\nabla F(\cdot;h): X\to V$
  is continuous;
  \item[iii)] for every $x$, the mapping $h\mapsto\nabla F(x;h)$
  is continuous from $X$ to $V$.
\end{itemize}
\end{lemma}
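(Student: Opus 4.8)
The plan is to show that, under conditions (i)--(iii), the map $\nabla F(\cdot):X\to L(X,V)$ defined by $x\mapsto \nabla F(x;\cdot)$ is well-defined with values in $L(X,V)$ and is strongly continuous, and that $F$ itself is continuous; membership in $\mathcal{G}^1(X,V)$ then follows by definition. The only genuine content beyond unwinding definitions is that the map $h\mapsto\nabla F(x;h)$, which (i) and the elementary properties of a limit guarantee to be \emph{homogeneous} (i.e.\ $\nabla F(x;th)=t\,\nabla F(x;h)$), is actually \emph{linear and bounded}; once that is established, strong continuity is exactly hypothesis (ii) and continuity of $F$ is a short consequence.

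First I would fix $x\in X$ and record that, directly from the definition of directional derivative as a limit of difference quotients, $h\mapsto\nabla F(x;h)$ is positively homogeneous of degree one. Together with hypothesis (iii), that this map is continuous from $X$ to $V$, I would invoke the standard fact that a continuous, homogeneous map between Banach spaces that arises as a pointwise limit of the linear difference-quotient operators must be additive: for fixed $h,k$, write $\tfrac{1}{s}\big(F(x+s(h+k))-F(x)\big)$ and compare with $\tfrac1s\big(F(x+sh)-F(x)\big)+\tfrac1s\big(F(x+sh+sk)-F(x+sh)\big)$; the first difference quotient of the second expression converges to $\nabla F(x;h)$, and the second, after the substitution $y=x+sh\to x$, is controlled using (ii) and (iii) so that its limit is $\nabla F(x;k)$, giving $\nabla F(x;h+k)=\nabla F(x;h)+\nabla F(x;k)$. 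A homogeneous additive map that is continuous is $\R$-linear and bounded, so $\nabla F(x;\cdot)\in L(X,V)$; denote it $\nabla F(x)$.

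Next, strong continuity of $\nabla F:X\to L(X,V)$ is immediate: by definition it means that for each fixed $h$ the map $x\mapsto\nabla F(x)h=\nabla F(x;h)$ is continuous from $X$ to $V$, which is precisely hypothesis (ii). Finally I would check that $F$ is continuous: fixing $x$ and using that $s\mapsto F(x+sh)$ is differentiable at $s=0$ for every $h$ (hence in particular $F(x+sh)\to F(x)$ as $s\to0$), together with the local boundedness of $\nabla F$ near $x$ supplied by (iii) — or, more simply, by a mean-value-type estimate along segments using that the difference quotients converge — one gets $\|F(x+h)-F(x)\|_V\to 0$ as $\|h\|_X\to0$. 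Collecting these three facts, $F\in\mathcal{G}^1(X,V)$.

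I expect the main obstacle to be the linearity step: homogeneity of $h\mapsto\nabla F(x;h)$ is automatic, but additivity is not purely formal and needs the interplay of (ii) and (iii) as sketched above (this is the point where one really uses that the directional derivative is a limit of \emph{linear} operators in $h$, namely the scaled increments, rather than an abstract homogeneous map). Everything else — identifying strong continuity of $\nabla F$ with (ii), and deducing continuity of $F$ — is routine.
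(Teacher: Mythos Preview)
The paper does not prove this lemma at all: it is stated without proof, as a standard criterion borrowed from \cite{FT1}, and followed immediately by the remark that ``when $F$ depends on additional arguments, the previous definitions and properties have obvious generalizations.'' So there is no argument in the paper to compare yours against.

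Your plan is essentially correct and is the standard way to prove this fact, but two points deserve tightening. First, in the additivity step you invoke both (ii) and (iii); in fact only (i) and (ii) are needed there. Writing
\[
\frac{F(x+sh+sk)-F(x+sh)}{s}=\int_0^1 \nabla F\big(x+sh+s\tau k;\,k\big)\,d\tau
\]
(the fundamental theorem of calculus applies to $t\mapsto F(x+sh+tk)$ because by (i) it is everywhere differentiable with derivative $\nabla F(x+sh+tk;k)$, continuous in $t$ by (ii)), the integrand converges to $\nabla F(x;k)$ as $s\to 0$ by (ii), and dominated convergence (with a bound coming from Banach--Steinhaus, see below) gives the limit. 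Hypothesis (iii) is used afterwards, to pass from ``additive and homogeneous'' to ``bounded linear''.

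Second, your justification of the continuity of $F$ is the weak spot. Saying ``local boundedness of $\nabla F$ near $x$ supplied by (iii)'' is not right: (iii) gives boundedness of $\nabla F(x)$ at each \emph{fixed} $x$, not uniformly for $x$ in a neighbourhood. What actually yields the needed local uniform bound is (ii) together with the Banach--Steinhaus theorem: if $y_m\to x$ then $\nabla F(y_m)k\to \nabla F(x)k$ for every $k$ by (ii), hence $\sup_m\|\nabla F(y_m)\|_{L(X,V)}<\infty$. Applying this along the segments $\{x+t(x_n-x):t\in[0,1]\}$ gives a uniform bound $M$, and then the mean-value estimate $\|F(x_n)-F(x)\|\le M\|x_n-x\|$ finishes the continuity of $F$. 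With these two clarifications your argument is complete.
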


When $F$ depends on additional arguments,
 the previous definitions and properties  have
obvious generalizations.


The following assumptions are necessary in order to provide
G\^ateaux differentiability for the coefficients of the abstract
formulation.

\vskip 1\baselineskip

\begin{assumption}\label{hp-B}
For a.a. $t \in [0,T]$, $\xi \in [0,1]$ the functions
  $f(t,\xi, \cdot)$ and
  $g(t,\xi, \cdot)$ belong to the class $C^1(\R)$.
\end{assumption}

\vskip 1\baselineskip

\begin{proposition}\label{gateaux}
Under assumptions \ref{hp-A} and \ref{hp-B}, for every $s>0$,
$t\in [0,T]$,
$$
\bF(t,\cdot)\in \mathcal{ G}^1(\cX, \cX),\qquad
e^{s\bA}\bG(t,\cdot)\in \mathcal{ G}^1(\cX, L_2(\cX)).$$
\end{proposition}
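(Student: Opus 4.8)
The plan is to deduce both memberships from Lemma \ref{staing}, i.e.\ to check that in each case the directional derivatives exist, are continuous in the base point for each fixed direction, and are continuous in the direction for each fixed base point. The maps $\bF(t,\cdot)$ and $\bG(t,\cdot)$ are Nemytskii (superposition) operators built from $f(t,\xi,\cdot)$ and $g(t,\xi,\cdot)$, which are $C^1$ by Assumption \ref{hp-B} and whose $u$-derivatives are bounded by the Lipschitz constant $L$ of Assumption \ref{hp-A}; continuity of $\bF(t,\cdot):\cX\to\cX$ and of $e^{s\bA}\bG(t,\cdot):\cX\to L_2(\cX)$ is already contained in Proposition \ref{F-G} (part 1, and estimate (\ref{primaipotesisug-ii})), so only the differentiability properties require work.

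For $\bF$: since $\bF(t,\mathbf{u})=(F(t,u),0)$ with $F(t,u)(\xi)=f(t,\xi,u(\xi))$, the natural candidate is $\nabla\bF(t,\mathbf{u})\mathbf{h}=(\partial_u f(t,\cdot,u(\cdot))\,h(\cdot),0)$. For hypothesis (i), fix $\mathbf{u}=(u,x)$, $\mathbf{h}=(h,y)$ and expand the difference quotient componentwise; by the mean value theorem $r^{-1}[f(t,\xi,u(\xi)+rh(\xi))-f(t,\xi,u(\xi))]=\partial_u f(t,\xi,u(\xi)+\theta_{r,\xi}rh(\xi))\,h(\xi)$ for some $\theta_{r,\xi}\in(0,1)$, which converges pointwise to $\partial_u f(t,\xi,u(\xi))h(\xi)$ by continuity of $\partial_u f(t,\xi,\cdot)$ and is dominated by $L|h(\xi)|\in X$; dominated convergence gives the limit in $X$. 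For (ii), if $\mathbf{u}_n\to\mathbf{u}$ in $\cX$, a standard subsequence argument (pass to an a.e.\ convergent subsequence, apply dominated convergence with dominant $L|h|$, and observe that the limit does not depend on the subsequence) yields $\partial_u f(t,\cdot,u_n)h\to\partial_u f(t,\cdot,u)h$ in $X$. Hypothesis (iii) is immediate because $\mathbf{h}\mapsto\nabla\bF(t,\mathbf{u})\mathbf{h}$ is linear with norm $\leq L$.

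For $e^{s\bA}\bG(t,\cdot)$: only the first block of $\bG$ depends on $\mathbf{u}$, namely $G_1(t,u)y(\xi)=g(t,\xi,u(\xi))y(\xi)$, while $G_2(t,v)\eta=h(t)\eta$ is constant in $\mathbf{u}$; hence the candidate derivative $\nabla_{\mathbf{u}}\bG(t,\mathbf{u})\mathbf{k}$ is multiplication by $\partial_u g(t,\cdot,u(\cdot))k(\cdot)$ in the first component and $0$ in the second. The point is to run every limiting argument directly in the Hilbert--Schmidt norm by expanding along the eigenbasis $\{\phi_k\}$ of $\bA$ from Remark \ref{decspet}, exactly as in the proof of Proposition \ref{F-G}: since $\phi_k=(e_k(x),e_k(0),e_k(1))$ and $|e_k(x)|\leq B_k$ with $B_k<\frac{1+\sqrt{-\lambda_k}}{-1+\sqrt{-\lambda_k}}\to1$, we have $|e_k|\leq C$ uniformly in $k$. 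Then for hypothesis (i), $\left|r^{-1}[e^{s\bA}\bG(t,\mathbf{u}+r\mathbf{k})-e^{s\bA}\bG(t,\mathbf{u})]-e^{s\bA}\nabla_{\mathbf{u}}\bG(t,\mathbf{u})\mathbf{k}\right|_{L_2(\cX)}^2=\sum_k e^{2s\lambda_k}\int_0^1|\partial_u g(t,\xi,u+\theta_{r,\xi}rk)-\partial_u g(t,\xi,u)|^2\,k(\xi)^2\,|e_k(\xi)|^2\,d\xi$; each $\xi$-integral tends to $0$ as $r\to0$ by dominated convergence (dominant $4L^2C^2k(\xi)^2$) and is itself $\leq 4L^2C^2|k|_X^2$, so the series, whose $k$-th term is dominated by $4L^2C^2|k|_X^2\,e^{2s\lambda_k}$ — summable by Theorem \ref{th:sgr-hs} — vanishes in the limit by a second application of dominated convergence for series. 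Hypothesis (ii) follows by the same double dominated-convergence scheme combined with the subsequence argument used for $\bF$, and hypothesis (iii) because $\mathbf{k}\mapsto e^{s\bA}\nabla_{\mathbf{u}}\bG(t,\mathbf{u})\mathbf{k}$ is linear with $|e^{s\bA}\nabla_{\mathbf{u}}\bG(t,\mathbf{u})\mathbf{k}|_{L_2(\cX)}\leq C\,L\,|e^{s\bA}|_{L_2(\cX)}\,|\mathbf{k}|_{\cX}$. The main obstacle is precisely the legitimacy of these interchanges of limit and Hilbert--Schmidt sum; once the uniform bound $|e_k|\leq C$ is secured this is routine, and Lemma \ref{staing} then delivers the proposition.
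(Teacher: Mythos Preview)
Your proposal is correct and follows essentially the same route as the paper: both verify Lemma \ref{staing} directly, dismiss the $\bF$ part as an easy consequence of $f(t,\xi,\cdot)\in C^1$, and for $e^{s\bA}\bG(t,\cdot)$ expand the Hilbert--Schmidt norm along the eigenbasis $\{\phi_k\}$, use the uniform bound on $|e_k|$, rewrite the difference quotient via the mean value theorem (the paper uses the integral form $\int_0^1 g_u(t,u+\alpha r v)\,d\alpha$), and conclude by dominated convergence. Your write-up is in fact more explicit than the paper's about the double dominated-convergence (inner integral, then series) and about hypotheses (ii)--(iii), which the paper simply waves through.
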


\begin{proof}
The first statement is an immediate consequence of the fact
that $f(t,\xi, \cdot) \in C^1(\R,\R)$. In order to prove that $e^{s
\bA}\bG(t, \cdot)$ belongs to the class $\mathcal{G}^1(\cX,L_2(\cX))$
we use the continuous differentiability of $g$ and an argument similar
to that used in the proof of Proposition \ref{F-G}. 

We note that, for 
$\mathbf{u} = \begin{pmatrix} u \\ x \end{pmatrix}$ 
and 
$\mathbf{v} = \begin{pmatrix} v \\ y \end{pmatrix}$, 
the gradient operator $\nabla_{\u} \left(e^{s\bA}
  \bG(t,\mathbf{u})\right) \v $ is an Hilbert Schmidt operator that maps
$$
\mathbf{w}=\begin{pmatrix}w
\\ p \end{pmatrix}
\mapsto e^{s\bA} \begin{pmatrix} g_u(t,\cdot,u(\cdot)) w(\cdot) v(\cdot) \\ 0 \end{pmatrix} =e^{s\bA}\left(\nabla_u (\bG(t,\u)\v )(\w)\right)
$$ 

In fact, we have
\begin{align*}
\lim_{r\rightarrow 0} &\left\|\frac{e^{s\bA}\bG(t,\u+r\v)-e^{s\bA} \bG(t,\mathbf{u})}{r} -
\nabla e^{s\bA} \bG(t,\mathbf{u})\v\right\|_{L_2(\cX)}
\\
&=\lim_{r\rightarrow 0} \sum_{j,k} \left| <\frac{
e^{s\bA}\bG(t,\u+r\v)- e^{s\bA}\bG(t,\u)}{r}\, \phi_j - e^{s\bA}\left(\nabla_u (\bG(t,\u)\v )\phi_j \right), \phi_k>\right|_{\cX}^2
\\
&=\lim_{r\rightarrow 0} \sum_{j,k} \left|<\left(\frac{
\bG(t,\u+r\v)- \bG(t,\u)}{r} - \nabla_u \bG(t,\mathbf{u}) \v
\right)\phi_j,e^{s\bA} \phi_k>\right|_{\cX}^2
\\
&=\lim_{r\rightarrow 0} \sum_{k}e^{2s\lambda_k} \left|
\left(\frac{\bG(t,\u+r\v)- \bG(t,\u)}{r} - \nabla_u
\bG(t,\mathbf{u}) \v \right)\phi_k \right|_{\cX}^2 
\\
&= \lim_{r\rightarrow 0} \sum_{k}e^{2s\lambda_k}
 \int_0^1 \left|\frac{g(t,u(\xi)+rv(\xi))-g(t,u(\xi))}{r} e_k(\xi)- 
g_u(t,u(\xi))v(\xi)e_k(\xi) \right|^2d\xi
\\
&\leq c  \lim_{r\rightarrow 0} \sum_{k}e^{2s\lambda_k}
 \int_0^1 \left|\frac{g(t,u(\xi)+rv(\xi))-g(t,u(\xi))}{r} - 
g_u(t,u(\xi))v(\xi) \right|^2d\xi
\\
&=c\lim_{r\rightarrow 0} \sum_{k}e^{2s\lambda_k} \int_0^1 \left|
 \int_0^1 \big[g_u(t,u(\xi)+\alpha rv(\xi))- g_u(t,u( \xi))\big] d\alpha \, v(\xi) \right|^2d\xi
\end{align*}
and, by dominated convergence, this limit is equal to zero.
In similar way we can prove the points $(ii)-(iii)$ of Lemma \ref{staing} to obtain the thesis.
\end{proof}

In order to prove the main result of this section we require the
following hypothesis.

\begin{assumption}\label{hpcontrollo}
\begin{enumerate}
\item[]
\item[(i)] $\lambda$ is measurable and for a.e. $t \in [0,T]$, for
all
      $\textbf{u},\mathbf{u}' \in \cX$, $z \in \mathcal{Z}$
        \begin{equation*}
        |\lambda(t,\textbf{u},z) -
      \lambda(t,\textbf{u}',z)| \leq C|1 +\textbf{u} +
      \textbf{u}'|^m|\textbf{u}-\textbf{u}'|
        \end{equation*}
        \begin{equation*}
          |\lambda(t,0,z)| \leq C
        \end{equation*}
        for suitable $C \in \R^+$, $m \in\mathbb{N}$;
      \item[(ii)] $\mathcal{Z}$ is a Borel and bounded subset of
        $\R^2$;
      \item[(iii)] $\Phi \in {\mathcal{G}}^1 (\cX,\mathbb{R})$
      and, for every $\sigma \in [0,T]$,
        $\psi(\sigma, \cdot, \cdot) \in {\mathcal{G}}^{1,1} (\cX \times \cX, \R)$;
      \item[(iv)] for every $t \in [0,T]$, $\u,\w,\h \in \cX$
        \begin{equation*}
          |\nabla_{\u} \psi(t,\u,\w)\h| + |\nabla_{\u} \phi(\u)\h| \leq
          L|\h|(1 +|\u|)^m;
        \end{equation*}
      \item[(v)] for all $t \in [0,T]$, for all $\mathbf{u} \in \cX$ and $\w\in \cX$ there
        exists a unique $\Gamma(t,\mathbf{u},\w)\in \mathcal{Z}$ that
        realizes the minimum in (\ref{definhamilton}). Namely
        \begin{equation*}
          \lambda(t,\mathbf{u},\Gamma(t,\mathbf{u},\w)) + <\w, P
          \Gamma(t,\mathbf{u},\w)> = \psi(t,\mathbf{u},\w)
        \end{equation*}
  \end{enumerate}
\end{assumption}

\vskip1\baselineskip

\begin{theorem}\label{maincontrollo}
  Suppose that assumptions \ref{hp-A}, \ref{hp-B} and
  \ref{hpcontrollo} hold.  For all a.c.s. we have
  $J(t_{0},u_{0},z)\geq v(t_{0},u_{0})$ and the equality holds if and
  only if the following feedback law is verified by $z$ and $\u^{z}$:
  \begin{equation}
    z(\sigma)= \Gamma(\sigma,\u^{z}_{\sigma },
    G(\sigma,\u^{z}_{\sigma})^{*}\nabla_{x}v(\sigma,\u^{z}_{\sigma})),\quad
    \Pr- {\rm a.s. \; for\;a.a.\; } \sigma\in [t_{0},T].
    \label{leggefeedback} 
  \end{equation}
  Finally there exists at least an a.c.s. for which
  (\ref{leggefeedback}) holds. In such a system the closed loop
  equation:
  \begin{equation}\label{equazioneclosedloop}
    \begin{cases}
      d\overline{\u}_\tau =
    \bA\overline{\u}_\tau\; d\tau
    +\bG(\tau,\overline{\u}_\tau)P\Gamma(\tau, \overline{\u}_{\tau},
    \bG(\tau, \overline{\u}_{\tau })^*\nabla_{x}v(\tau ,
    \overline{\u}_{\tau }))\; d\tau
    \\ 
    \phantom{d\overline{\u}_\tau =
    \bA\overline{\u}_\tau\; d\tau
    }+\bF(\tau,\overline{\u}_\tau)\;
    d\tau+\bG(\tau,\overline{\u}_\tau)\; d\mathbf{W}_\tau,\qquad
    \tau\in [t_{0},T],
    \\ 
    \overline{\u}_{t_{0}} = \u_{0}\in \Chi.
    \end{cases}
  \end{equation}
  admits a solution and if $\overline{z}(\sigma)=
  \Gamma(\sigma,\overline{\u}_{\sigma} ,G(\sigma,\overline{\u}_{\sigma
  })^{*}\nabla_{x}v(\sigma , \overline{\u}_{\sigma }))$ then the
  couple $(\overline{z},\overline{\u})$ is optimal for the control
  problem.
\end{theorem}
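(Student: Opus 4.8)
The plan is to recognize the controlled state equation \eqref{state equation} with cost \eqref{costo} as an instance of the infinite-dimensional optimal control problem solved by Fuhrman and Tessitore, and to check that the data of our problem meet the hypotheses of \cite[Theorem 7.2]{FT1}. Most of the verification is already done: by Proposition \ref{pr:bA-genera} the operator $\bA$ generates a strongly continuous (analytic, self-adjoint) semigroup on $\cX$; by Theorem \ref{th:sgr-hs} and Proposition \ref{F-G} the composition $\ee^{s\bA}\bG(t,\cdot)$ is Hilbert--Schmidt with the integrable singularity $|\ee^{s\bA}\bG(t,\u)|_{L_2(\cX)}\le L\,s^{-1/4}(1+|\u|_\cX)$ and is Lipschitz in $\u$ with the same singularity; by Proposition \ref{gateaux} both $\bF(t,\cdot)$ and $\ee^{s\bA}\bG(t,\cdot)$ are of class $\mathcal G^1$; and Assumption \ref{hpcontrollo} supplies exactly the growth and regularity of $\lambda$, $\Phi$, of the Hamiltonian $\psi$ and of its unique minimizer $\Gamma$ that \cite{FT1} requires. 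Under these hypotheses \cite{FT1} produces a unique mild solution $v$ of the Hamilton--Jacobi--Bellman equation \eqref{kolmogorovnonlineare} with $v(t,\cdot)\in\mathcal G^1(\cX,\R)$, so that $\bG(\tau,\cdot)^*\nabla_x v(\tau,\cdot)$ is a well-defined, continuous, polynomially bounded map; the representation of $v$ is through the Markovian backward stochastic differential equation attached to the \emph{uncontrolled} equation \eqref{eq.astratta}.

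The first step is then the \emph{fundamental relation}. Given an admissible control system and the corresponding mild solution $\u^z$ of \eqref{state equation}, I would apply the It\^o formula to $v(\tau,\u^z_\tau)$, use that $v$ solves \eqref{kolmogorovnonlineare}, and take expectations; writing $\w_\tau:=\bG(\tau,\u^z_\tau)^*\nabla_x v(\tau,\u^z_\tau)$, the stochastic integral term has zero expectation thanks to $\u^z\in C([t_0,T];L^p(\Omega;\cX))$ for all $p$ together with the polynomial bounds of Assumption \ref{hpcontrollo}(i),(iv), and one obtains
\begin{equation*}
  J(t_0,u_0,z) = v(t_0,u_0) + \E\int_{t_0}^{T}\big[\lambda(\tau,\u^z_\tau,z_\tau) + \langle\w_\tau,Pz_\tau\rangle - \psi(\tau,\u^z_\tau,\w_\tau)\big]\,d\tau .
\end{equation*}
By the definition \eqref{definhamilton} of $\psi$ as an infimum the integrand is nonnegative, so $J(t_0,u_0,z)\ge v(t_0,u_0)$; and by the uniqueness of the minimizer in Assumption \ref{hpcontrollo}(v) it vanishes for a.a.\ $\tau$, $\Pr$-a.s., exactly when $z_\tau=\Gamma(\tau,\u^z_\tau,\w_\tau)$, i.e.\ when \eqref{leggefeedback} holds. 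This proves the inequality and the characterization of the optimal controls.

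It remains to exhibit one a.c.s.\ realizing the feedback law, i.e.\ to solve the closed loop equation \eqref{equazioneclosedloop}. This is the delicate point, since $\Gamma$ is in general only Borel measurable and a direct contraction argument is unavailable; I would argue by a Girsanov transformation. Start from the uncontrolled mild solution $\u$ of \eqref{eq.astratta} on the original space, and set $\bar z_\tau:=\Gamma\big(\tau,\u_\tau,\bG(\tau,\u_\tau)^*\nabla_x v(\tau,\u_\tau)\big)$, which is predictable and, by Assumption \ref{hpcontrollo}(ii), takes values in the bounded set $\cZ$. Since the shift $P\bar z$ affects only the finite-dimensional component $V$ of $\mathbf W=(W,V)$ and $\bar z$ is bounded, Novikov's criterion holds and $\rho_\tau=\exp\big(\int_0^\tau\langle\bar z_s,dV_s\rangle-\tfrac12\int_0^\tau|\bar z_s|^2\,ds\big)$ is a genuine martingale defining an equivalent probability $\bar\Pr$ under which $\bar{\mathbf W}_\tau:=\mathbf W_\tau-\int_0^\tau P\bar z_s\,ds$ is a Wiener process with the same covariance. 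Rewriting \eqref{eq.astratta} in terms of $\bar{\mathbf W}$ shows that $\u$ itself solves \eqref{equazioneclosedloop} under $\bar\Pr$ with $\overline{\u}=\u$; hence $(\Omega,\F,\{\F_t\},\bar\Pr,\bar{\mathbf W},\bar z)$ is an admissible control system satisfying \eqref{leggefeedback} by construction, and the fundamental relation yields $J(t_0,u_0,\bar z)=v(t_0,u_0)$, so $(\overline z,\overline{\u})$ is optimal.

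I expect the genuine obstacle to lie not in any single one of these steps but in the well-posedness of $v$ and in justifying the It\^o expansion of $v(\tau,\u^z_\tau)$ under mere G\^ateaux (rather than Fr\'echet) differentiability of $v$, $\bF$ and $\bG$ --- which is precisely what the $s^{-1/4}$ smoothing of $\ee^{s\bA}\bG$ from Theorem \ref{th:sgr-hs} and Proposition \ref{F-G} is designed to accommodate, and which we import from \cite{FT1} rather than reprove.
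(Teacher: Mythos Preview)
Your proposal is correct and follows exactly the paper's approach: the paper's own proof is a short reduction to \cite[Theorem~7.2]{FT1} after invoking Proposition~\ref{pr:bA-genera}, Proposition~\ref{F-G}, Proposition~\ref{gateaux} and Assumption~\ref{hpcontrollo}, precisely as you do. Your additional sketch of the fundamental relation and the Girsanov construction of the optimal a.c.s.\ is a faithful outline of what happens \emph{inside} \cite{FT1}, but the paper itself treats that result as a black box and does not reprove it.
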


\begin{proof}
By Proposition \ref{pr:bA-genera} we know that $\bA$ generates a
strongly continuous semigroup of linear operators $e^{t\bA}$ on
$\cX$. The assumption \ref{hp-A} ensures that the statements in
Proposition \ref{F-G} hold. Moreover the assumption \ref{hp-B}
guarantees that the results in Proposition \ref{gateaux} are true.
Finally these conditions together with the assumption
\ref{hpcontrollo} allow us to apply Theorem 7.2 in \cite{FT1} and
to perform the synthesis of the optimal control.
\end{proof}

\section*{References}

\end{document}